\newcommand{\jacobi}[2]{\left( \begin{array}{c} #1\\ \hline  #2 \end{array} \right)}
\newcommand{\legendre}[2]{(  #1 | #2  )}
\newcommand{\sty}{\displaystyle}
\newtheorem{theorem}{Theorem}
\newtheorem{lemma}{Lemma}
\newtheorem{proposition}{Proposition}
\newenvironment{singlespace}
{ \small \normalsize}
\begin{document}
%

\title{On the Representation of Primes by Binary Quadratic Forms, \\
  and Elliptic Curves}   %

\author{Michele Elia\thanks{Polytechnic of Turin, Italy}~~ and~~
        Federico Pintore\thanks{University of Trento, Italy} }


\maketitle

\thispagestyle{empty}

\begin{abstract}
\noindent
It is shown that, under some mild technical conditions, representations of prime numbers by binary quadratic forms can be computed in polynomial complexity  
by exploiting Schoof's algorithm, which counts the number of $\mathbb F_q$-points of an elliptic curve over a finite field $\mathbb F_q$.
Further, a method is described which computes representations of primes from reduced quadratic forms by means of the integral roots of polynomials over $\mathbb Z$.
Lastly, some progress is made on the still-unsettled general problem of deciding which primes are represented by which classes of quadratic forms of given discriminant.
\end{abstract}

\noindent
{\bf Mathematics Subject Classification (2010): 11D09,11Y40,11E12}

\noindent
{\bf Key words: } {\em Quadratic field, binary quadratic form,
                  representation of primes, Hilbert class polynomial, elliptic curve}

\section{Introduction}
Let $Q(x,y)=ax^2+bxy+cy^2$ be a binary quadratic form having integer coefficients, and with discriminant $\Delta=b^2-4ac$.
 Although the problem of finding integral solutions of the  equation 
\begin{equation}
   \label{main1}
    ax^2+bxy+cy^2=m,   \hspace{5mm}  m \in \mathbb Z \,\,
\end{equation} 
was already considered by Diophantus \cite{diophantus,bashmakova}, it is not yet entirely settled  \cite{gauss,buch,buell,cox,mathews,venkov}. 
 The long story of elliptic curves also began with Diophantus, \cite{bashmakova}
and is still continuing with the discovery of ever-new and intriguing properties. The merging  of the theories of these two 
algebraic objects, namely quadratic forms and elliptic curves, has generated a large number of results, whose  importance extends beyond number theory, and that also have practical applications \cite{buch,koblitz,hoff}.\\
The problem of deciding whether equation (\ref{main1}) is solvable depends on the discriminant $\Delta$ of the  quadratic form, and on $m$, and was in part addressed by Lagrange  \cite{lagrange}, who  proved the following
\begin{lemma}{\cite[Lemma 2.5, p. 26]{cox}}
   \label{lagrange}
An odd integer $m$ is represented by some quadratic form of discriminant $\Delta$, with $\gcd\{\Delta,m\}=1$, if and only if $\Delta$ is a quadratic residue modulo $m$, that is, $\Delta$ is a quadratic residue modulo every prime factor of $m$.
\end{lemma}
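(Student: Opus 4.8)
The plan is to prove both implications through the standard correspondence between proper representations and the leading coefficients of equivalent forms, and then to isolate the one genuine subtlety, the reduction modulo $4$ forced by the residue class of a discriminant. Throughout I read ``$m$ is represented'' in the sense of the cited Lemma~2.5 of \cite{cox}, namely \emph{properly} represented (so $\gcd(x_0,y_0)=1$ whenever $Q(x_0,y_0)=m$); this reading is essential, since the improper representation $Q(3,0)=9$ with $Q=x^2+xy-y^2$ of discriminant $\Delta=5$ represents $m=9$ although $5$ is not a square modulo the prime factor $3$. Note that primitivity of $Q$ costs nothing here: if $g=\gcd(a,b,c)>1$ then $g\mid m$ and $g\mid\Delta$, forcing $g\mid\gcd(\Delta,m)=1$, a contradiction.

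The tool I would use is the elementary fact that a form $Q$ of discriminant $\Delta$ properly represents $m$ if and only if $Q$ is properly equivalent to a form $mx^2+bxy+cy^2$. For the forward direction of this fact, given $Q(p,q)=m$ with $\gcd(p,q)=1$ I would choose $r,s\in\mathbb Z$ with $ps-qr=1$ and apply the unimodular substitution $x\mapsto px'+ry'$, $y\mapsto qx'+sy'$, which sends $Q$ to a properly equivalent form taking the value $Q(p,q)=m$ at $(1,0)$, hence one of the shape $mx^2+bxy+cy^2$; the converse is immediate, since such a form takes the value $m$ at $(1,0)$, and proper equivalence preserves both the discriminant and the set of properly represented integers.

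With this in hand, necessity is short: writing the equivalent form as $mx^2+bxy+cy^2$ and reading off $\Delta=b^2-4mc$ gives $b^2\equiv\Delta\pmod{4m}$, so $\Delta$ is a square modulo $m$, hence modulo every prime factor of $m$. For sufficiency I would construct the form explicitly. Assuming $\Delta$ is a quadratic residue modulo every prime factor of $m$, the fact that $m$ is odd and $\gcd(\Delta,m)=1$ lets Hensel's lemma lift each residue to the corresponding prime power, and the Chinese Remainder Theorem then produces $\beta$ with $\beta^2\equiv\Delta\pmod m$; this same argument justifies the equivalence, asserted in the statement, between being a residue modulo $m$ and modulo every prime factor. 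The decisive step is to upgrade this to a congruence modulo $4m$: since every discriminant satisfies $\Delta\equiv0,1\pmod4$, one may pick $\beta_4$ of the correct parity with $\beta_4^2\equiv\Delta\pmod4$, and as $\gcd(4,m)=1$ the Chinese Remainder Theorem yields $b$ with $b\equiv\beta\pmod m$ and $b\equiv\beta_4\pmod4$, whence $b^2\equiv\Delta\pmod{4m}$. Then $c:=(b^2-\Delta)/(4m)$ is an integer, the form $mx^2+bxy+cy^2$ has discriminant exactly $\Delta$, and it represents $m$ at $(1,0)$.

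The main obstacle I anticipate is precisely this passage from a square root of $\Delta$ modulo $m$ to one modulo $4m$: here the hypotheses ``$m$ odd'' and ``$\Delta\equiv0,1\pmod4$'' are indispensable, guaranteeing simultaneously the compatibility of the two congruences under the Chinese Remainder Theorem and the integrality of $c$, so that weakening either would break the construction. The only other point demanding care is the one flagged at the outset, that ``represented'' must be understood as \emph{properly} represented for the equivalence to hold.
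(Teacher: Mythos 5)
Your proof is correct and follows essentially the same route as the paper, which dispatches this lemma as an ``immediate consequence'' of Lemma~\ref{lemmacox} (the correspondence between proper representations of $m$ and forms $mx^2+Bxy+Cy^2$) without writing out the details. You have simply supplied those details --- the necessity direction via $\Delta=b^2-4mc$, the Hensel/CRT lift of the square root from the prime factors of $m$ up to modulus $4m$, and the observation (with a valid counterexample, $x^2+xy-y^2$ at $(3,0)$) that ``represented'' must be read as ``properly represented'' --- all of which is accurate.
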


\noindent
The problem of solving equation (\ref{main1}), which is also known as the \textit{representation problem} for the integer $m$ and the quadratic form $Q(x,y)$ of discriminant $\Delta$ may be split into two parts:
\begin{quotation}
\noindent
\begin{tabular}{ll}
 {\bf Problem 1:}  &
   a) {\em Establish whether  $Q(x,y)$ can represent $m$}, \\   
  &  b) {\em Find a representation whenever $Q(x,y)$ represents $m$}.
\end{tabular}
\end{quotation}

\noindent
The theory of quadratic forms, established by Gauss in his {\em Disquisitiones Arithmeticae} \cite{gauss}, sets the problem in its proper perspective.
 Gauss's theory, by introducing the notions of classes of quadratic forms and composition of forms, reduces the  representation problem of any composite $m$ to the representation of its prime factors \cite[p. 74-75]{buell}. This connection with factoring makes quadratic form theory of foremost importance, in view of its cryptographic applications \cite{hoff, koblitz, menezes}. 

\noindent
A first step towards resolving the representation problem, assuming the decomposition of $m$ into prime factors to be known, is to obtain the representations of primes by quadratic forms of discriminant $\Delta$. 
These representations are important for many reasons, but fundamentally because every prime might be represented only by a single class of quadratic forms
 \cite{buell,cox,mathews, cohn}. 
Consequently, when more than a single class exists, a second important question is:
\begin{quotation}
\noindent
\begin{tabular}{ll}
  {\bf Problem 2:} & {\em Find which class represents which prime}.
\end{tabular}
\end{quotation}

\noindent
 An elementary solution to this problem appears not to be possible, since neither integer congruences nor genus theory (i.e. Jacobi characters) are sufficient for the purpose. In particular, as discussed in Cox's book \cite{cox}, to establish whether a prime $p$ is represented by a principal form it is necessary to proceed by splitting polynomials modulo $p$. 
Further, there are still many computational obstacles before explicit representations may be obtained.\\
In this direction, Schoof's computational strike \cite{schoof}, by exploiting the connection between quadratic forms and elliptic curves, allows us to solve Problem 1 when $m$ is prime, in polynomial complexity (possibly under some technical condition), the complexity depending on the size of $m$. 

\noindent
The paper is organized as follows. Section 2 summarizes the background of quadratic forms, 
  to put this subject into context. 
Section 3 deals with the connections between ideals, quadratic fields, and elliptic curves. 
Section 4 addresses some computational aspects for computing square roots modulo prime numbers, in particular using Schoof's algorithm to count the number of points of elliptic curves over finite fields. 
Section 5 presents a new algorithm to compute the representation of primes by reduced quadratic forms.
Section 6 comments on the partition of primes into classes of representability, with some examples.
 Section 7 is devoted to conclusions and comments on future work. 

\section{Preliminaries}
Throughout this paper, primitive binary quadratic forms $Q(x,y)$ alone will be considered. A form $ax^2+bxy+cy^2$ will also be denoted $(a,b,c)$, and its discriminant $\Delta=b^2-4ac$ will always be assumed to be either square-free, or $4$ times a square-free integer; set $D=\Delta$ if  $\Delta=1 \bmod 4$, and $D=\frac{\Delta}{4}$ if $\Delta=0 \bmod 4$. 
Let $\mathcal B=\{1, \omega \}$ denote an integral basis of the quadratic field  
$\mathbb K=\mathbb Q(\sqrt D)$,
 then $\omega$ can be taken to be $\omega=\frac{1+\sqrt D}{2}$ if $D =1 \bmod 4$, and 
 $\omega=\sqrt{D}$ if $D=2,3 \bmod 4$. Let $\mathfrak G(\mathbb K/\mathbb Q)=\{e, \sigma \}$ denote the Galois group of  $\mathbb K$ over $\mathbb Q$.  
 Two forms $Q_1(x,y)$ and $Q_2(x,y)$ with the same discriminant $\Delta$ are equivalent if integers $p$, $q$, $r$, and $s$ exist such that~ 
$ Q_1(x,y)=Q_2(px+qy,rx+sy) ~\mbox{and}~  ps-qr=\pm 1,  ~ $
and are properly equivalent if $ps-qr=1$, \cite[\S 157]{gauss}. 
That is, all  properly equivalent quadratic forms are produced by the action of the special unimodular group  
 $PSL(2, \mathbb Z)$ of $2$-dimensional matrices on one of them \cite[Theorem 3.7, p.116]{mollin1}.
The action of this group partitions
the set of quadratic forms with the same discriminant $\Delta$ into a finite set of $h_{\mathbb{K}}$ proper equivalence classes \cite[Theorem 3.7, p.116]{mollin1}. 
The class number  $h_{\mathbb K}$ is equal to the number of ideal classes of $\mathbb K$ \cite{frohlich}. 
Each class of properly equivalent quadratic forms is identified by some reduced form
 $ax^2+bxy+cy^2$, that is, forms whose coefficients satisfy the condition
\begin{equation}
  \label{redcon}
      \begin{array}{cl}
         |b| \leq a\leq  c   &  ~~~\mbox{if}~~\Delta < 0 \\
       0 < b < \sqrt {\Delta}  ~~~ \mbox{and} ~~~ 
       \sqrt \Delta -b < 2 |a| < \sqrt \Delta +b & ~~~\mbox{if}~~\Delta > 0 ~~~. 
       \end{array}   
\end{equation}
Positive definite reduced forms have $|b| \leq \sqrt{\frac{|\Delta|}{3}}$
 \cite[Proposition 2.1, p.13]{buell}; furthermore, distinct reduced forms are not properly
  equivalent, \cite[Theorem 2.4, p.15]{buell}, with two possible exceptions: 
$$        ax^2+bxy+ay^2 \sim ax^2-bxy+ay^2   ~~~~\mbox{and}~~~~  
       ax^2+axy+cy^2 \sim ax^2-axy+cy^2 ~~.  \\
$$      
In either of these two cases, the equivalence class representative is chosen so as to have a non-negative center coefficient, consequently if $\Delta < 0$ every form is equivalent to a single reduced form (see \cite[p.17]{buell}). \\
Whereas if $\Delta > 0$, each class $\mathcal C_i$, $i=1, \ldots, h_{\mathbb K}$, of properly equivalent reduced forms
consists of an even number $P_i(\Delta)$ of forms \cite{gauss,venkov}, usually  $P_i(\Delta)>2$, when these cardinalities $P_i(\Delta)$ are large, 
the representation problem is more laborious. The explicit value of the period of a class $\mathcal C_j$ 
is given as a theorem, which is taken without proof from \cite[p.111]{venkov}, see also \cite{perron}. 
\begin{theorem}
  \label{period}
The period of a class $\mathcal C_j$ is equal to the period of the continued fraction representing the positive root $\Omega$ of the quadratic characteristic polynomial $at^2+bt+c$ associated to
 any quadratic form $(a,b,c)$ in class $\mathcal C_j$. 
\end{theorem}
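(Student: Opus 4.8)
The plan is to realise the whole cycle of reduced forms that constitutes the class $\mathcal C_j$ as a single period of the continued fraction of $\Omega$, by matching the right-neighbour (reduction) operator on forms with the step operator on the complete quotients of $\Omega$. Choosing in $\mathcal C_j$ the representative $(a,b,c)$ with $a<0$ (permissible, since the statement allows any form of the class), I would attach to it the positive root $\Omega=\frac{b+\sqrt\Delta}{2|a|}$ and verify that the reduction inequalities (\ref{redcon}) for $\Delta>0$ are exactly equivalent to $\Omega$ being a reduced quadratic irrational, that is $\Omega>1$ and $-1<\Omega^{\sigma}<0$ for the Galois conjugate $\Omega^{\sigma}$. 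By Galois's classical theorem a quadratic irrational has a purely periodic continued fraction precisely when it is reduced; hence the expansion of $\Omega$ is purely periodic and each of its complete quotients is again a reduced surd, and so again the root of a reduced form of discriminant $\Delta$.

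The core step is to show that the right neighbour $(a,b,c)\mapsto(c,b',c')$, with $b'\equiv-b\pmod{2c}$ and $c'=(b'^2-\Delta)/(4c)$, enacts exactly one step of this continued fraction: the congruence on $b'$ makes $s=(b+b')/(2c)$ an integer, the neighbour is obtained from $(a,b,c)$ by the elementary unimodular substitution with parameter $s$, and the root is transported to the next complete quotient accordingly. Since iterating the neighbour map generates the entire cycle of reduced forms of $\mathcal C_j$ while never leaving the proper equivalence class, the reduced forms of $\mathcal C_j$ are placed in bijection with the complete quotients produced during one full turn around the cycle, and returning to the initial form after $P_j(\Delta)$ steps is the same event as $\Omega$ recurring after one period of its continued fraction, whose partial quotients are precisely the integers $s_i=(b_i+b_{i+1})/(2c_i)$. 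The evenness of $P_j(\Delta)$ drops out here as well: a reduced form has $ac=(b^2-\Delta)/4<0$, so $a$ and $c$ are of opposite sign, and since the neighbour replaces $a$ by $c$ the leading coefficient changes sign at every step, forcing the cycle length to be even.

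The delicate point, which I expect to be the real obstacle, is exactly this sign and normalisation bookkeeping. A reduced form and its sign-companion (the one with $a$ and $c$ negated) carry essentially the same underlying surd, so the \emph{ordinary} continued fraction of $\Omega$ may have period equal to only half the length of the form cycle; this is what occurs when the fundamental unit of $\mathbb K$ has norm $-1$, for instance for $\Delta=8$, where the cycle $(1,2,-1)\to(-1,2,1)\to(1,2,-1)$ has length two whereas $1+\sqrt2=[\overline{2}\,]$ has ordinary period one. To secure a clean equality one must read the continued fraction in the sense adapted to the reduction operator, recording the signed partial quotients $s_i$ above rather than the ordinary ones; with that convention one period corresponds bijectively, and without repetition, to the full list of $P_j(\Delta)$ reduced forms, which yields the asserted equality of periods.
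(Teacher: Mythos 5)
The paper contains no proof to measure yours against: Theorem \ref{period} is explicitly ``taken without proof'' from Venkov, so your argument is necessarily new content rather than a variant of the authors' own. As an outline it is the standard and essentially correct one. The equivalence you assert between the reduction inequalities (\ref{redcon}) for $\Delta>0$ and the condition that $\Omega=\frac{b+\sqrt\Delta}{2|a|}$ be a reduced surd does check out (for a reduced form with $a<0$ one has $\Omega>1 \Leftrightarrow 2|a|<\sqrt\Delta+b$ and $-1<\Omega^{\sigma}<0 \Leftrightarrow 0<\sqrt\Delta-b<2|a|$), Galois's theorem then gives pure periodicity, the right-neighbour operator does advance the complete quotient by one step, and your parity argument from $ac=(b^2-\Delta)/4<0$ is correct. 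Restricting to a reduced representative is harmless even though the theorem says ``any'' form, since a non-reduced form only changes the pre-period.

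The point you flag in your last paragraph deserves emphasis, because it is not mere bookkeeping: it determines whether the theorem is even true as literally stated. With the ordinary simple continued fraction the asserted equality of periods fails exactly when the fundamental unit of $\mathbb K$ has norm $-1$ (your $\Delta=8$ example: cycle of length $2$ versus ordinary period $1$), so the statement holds only for the continued-fraction convention implicit in Venkov's reduction theory --- the expansion whose partial quotients are your integers $s_i=(b_i+b_{i+1})/(2c_i)$ attached to the alternating substitutions, which is also the convention visible in the $\alpha$ columns of Table \ref{tab1}. Since the paper cites the theorem without specifying the convention, your observation is the one substantive caveat a reader needs; with that convention fixed, your proof strategy goes through, and what remains is only the routine verification that the neighbour map is a bijection from the cycle of reduced forms onto the complete quotients of one period.
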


\noindent
It follows that the length of any period is upper bounded by $\Delta^{\/2} \ln(\Delta)$,  \cite[p.329-337]{hua}). 
 Since every known method for computing these periods is of non-polynomial complexity in the size of the discriminant, the direct continued fraction development of $\Omega$ is  a non-polynomial. \\
It should be remarked that, although of comparable size, the periods may be quite different. As an example, consider the three classes (or periods) 
$\mathcal C_1$, $\mathcal C_2$, and $\mathcal C_3$ of reduced quadratic forms of discriminant
$4 \cdot 7565$, which correspond to the  three classes of ideals in the quadratic field $\mathbb Q(\sqrt{7565})$ of class number $h_{\mathbb K}=3$. 
These classes are fully reported in Table \ref{tab1}, along with  the periods of the continued fraction of $\Omega$ in the columns labelled $\alpha$,
 containing, the positive roots of the characteristic polynomial associated with the first quadratic form in each class (see \cite[p.127]{dirichlet}). Furthermore, the coefficients in the columns with header the values of $\Omega$ define the linear transformations 
$$  \begin{array}{lcl} 
     (a_i,b_i,c_i)  \rightarrow   (a_i, b_i+2\alpha_i a_i, c_i+b_i \alpha_i+ a_i \alpha_i^2)  &~~&  \mbox{odd} ~~ i \\        
  (a_i,b_i,c_i)  \rightarrow   ( a_i+b_i \alpha_i+ c_i \alpha_i^2, b_i+2\alpha_i c_i, c_i)  &~~&  \mbox{even} ~~i \\
    \end{array}
$$
from the quadratic form on the same line to the quadratic form on the line below. The transformation of the quadratic form on the last line of each period returns to the initial quadratic form.

\noindent
The notions of proper equivalence and proper representation are nicely connected
 by the following lemma, quoted from \cite[p.25]{cox} without proof.

\begin{lemma}
   \label{lemmacox}
A form $Q(x,y)$ properly represents an integer $m$ if and only if $Q(x,y)$ 
 is properly equivalent to the form $m x^2+Bxy+Cy^2$ for some $B, C \in \mathbb Z$.   
\end{lemma}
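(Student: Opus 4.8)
The plan is to prove both implications directly from the definition of proper equivalence, the key tool being that a substitution $(x,y)\mapsto(px+qy,\,rx+sy)$ with $ps-qr=1$ carries primitive integer vectors to primitive integer vectors, since its inverse is again an integer substitution of determinant $1$. Throughout, recall that $Q(x,y)$ \emph{properly} represents $m$ exactly when $Q(x_0,y_0)=m$ for some coprime pair $x_0,y_0$.

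For the forward implication I would begin with a proper representation $Q(p,r)=m$, where $\gcd(p,r)=1$. By B\'ezout's identity there exist integers $q,s$ with $ps-qr=1$, and the resulting substitution defines a form $Q'(x,y)=Q(px+qy,\,rx+sy)$ that is, by definition, properly equivalent to $Q$. Evaluating at $y=0$ shows that the coefficient of $x^2$ in $Q'$ is exactly $Q(p,r)=m$, so $Q'=mx^2+Bxy+Cy^2$ for suitable $B,C\in\mathbb Z$. This is precisely the required form.

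For the converse, suppose $Q$ is properly equivalent to $g(x,y)=mx^2+Bxy+Cy^2$. The form $g$ represents $m$ at the primitive vector $(1,0)$, hence properly represents $m$. Writing the equivalence as $g(x,y)=Q(px+qy,\,rx+sy)$ with $ps-qr=1$, I obtain $m=g(1,0)=Q(p,r)$; and since $(p,r)$ is the first column of a determinant-$1$ integer matrix it satisfies $\gcd(p,r)=1$, so this representation of $m$ by $Q$ is proper. If instead the equivalence is presented in the opposite direction, one applies the same argument after replacing the substitution by its (integer, determinant-$1$) inverse, which exists by symmetry of proper equivalence.

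The proof is essentially bookkeeping, and the single point that requires care is the preservation of the primitivity condition under the change of variables, which is what distinguishes \emph{proper} representation and \emph{proper} equivalence from their ordinary counterparts. Since this follows at once from the invertibility of unimodular substitutions over $\mathbb Z$, I anticipate no genuine obstacle; the only thing to watch is to keep the direction of each equivalence consistent between the two halves of the argument.
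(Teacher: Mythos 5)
Your proof is correct and complete: the forward direction via B\'ezout's completion of a primitive vector $(p,r)$ to a determinant-$1$ matrix, and the converse via evaluating at the primitive vector $(1,0)$ and noting that a column of a unimodular matrix has coprime entries, is exactly the standard argument (it is Cox's own proof of this lemma). The paper itself states the lemma without proof, citing Cox, so there is no in-paper argument to compare against; your write-up fills that gap faithfully and correctly handles the one delicate point, the symmetry of proper equivalence.
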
 

\noindent
An immediate consequence of this Lemma is Lagrange's result, given above as Lemma \ref{lagrange}.
It is worth recalling that different classes of improperly equivalent quadratic forms represent the same set of primes. 
However, Cheboratev's theorem implies that the set of primes represented by the quadratic forms of the same discriminant is equally partitioned, 
 in the sense that the relative density of primes represented by one
  out of the $h_{\mathbb K}$ properly inequivalent classes is $\sty \frac{1}{h_{\mathbb K}}$.

\begin{table}
  \begin{tabular}{|c|c|c|c|c|c|c|}  \hline
 & $\frac{86+\sqrt{7565}}{1}$  &  $\mathcal C_1$ & $\frac{85+\sqrt{7565}}{5}$  &  $\mathcal C_2$ &
                                                                  $\frac{83+\sqrt{7565}}{13}$  &  $\mathcal C_3$ \\ \hline
$i$ &$\alpha$  &  $(a,b,c)$ & $\alpha$  &  $(a,b,c)$ &
                                                                  $\alpha$  &  $(a,b,c)$ \\ \hline
 $1$ & $172$        & $[1,-172,-169]$  &  $34$     & $[5,-170,-68]$   &   $13$         & $[13,-166,-52]$   \\
 $2$ & $1$            & $[-169,172,1]$   &  $2$       & $[-68,170,5]$     &   $13$         & $[-13,172,13]$   \\
 $3$ &  $42$         & $[4,-166,-169]$  &  $1$       & $[73,-102,-68]$ &   $3$           & $[52,-166,-13]$   \\
 $4$ &  $2$           & $[-85,170,4]$     &  $1$       & $[-97,44,73]$     &   $3$          & $[-43,146,52]$   \\ 
 $5$ &  $42$         & $[4,-170,-85]$    &  $8$       & $[20,-150,-97]$  &   $1$         & $[103,-112,-43]$   \\
 $6$ &  $1$           & $[-169,166,4]$   &  $10$     & $[-17,170,20]$    &   $2$         & $[-52,94,103]$   \\  \hline
 $7$ &                   &                             &  $8$       & $[20,-170,-17]$  &   $1$         & $[83,-114,-52]$   \\
 $8$ &                   &                             &  $1$       & $[-97,150,20]$   &   $1$         & $[-83,52,83]$   \\
 $9$ &                   &                             &  $1$       & $[73,-44,-97]$    &   $2$         & $[52,-114,-83]$   \\
 $10$ &                 &                             &  $1$       & $[-68,102,73]$   &   $1$         & $[-103,94,52]$   \\  \hline
 $11$ &                 &                             &               &                          &   $3$         & $[43,-112,-103]$   \\
 $12$ &                 &                             &               &                          &   $3$         & $[-52,146,43]$   \\  \hline
  \end{tabular}
\caption{Reduced classes of quadratic forms with  $\Delta =4\cdot 7565$}
\label{tab1}
\end{table}

%

%
\noindent
 Gauss's composition of two forms $Q_1(x,y)=(a_1, b_1, c_1)$  and 
   $Q_2(x,y)=(a_2,b_2,c_2)$, having the same discriminant $\Delta$, produces a quadratic form
  $Q_3(x,y)=(a_3,b_3,c_3)$ with the same discriminant and such that
$ Q_1(x_1,y_1) Q_2(x_2,y_2) = Q_3(x_3,y_3) $, if $x_3$ and $y_3$ are chosen as suitable
 bilinear functions of the pairs of variables $(x_1,y_1)$ and $(x_2,y_2)$.
Several composition methods are known: Appendix A reports
 Arndt's algorithm from \cite{buell}  without proof.  \\
Gauss's composition gave the set of classes of reduced forms a group structure, which turns out to be isomorphic to the class group of $\mathbb Q(\sqrt{\Delta})$, that is, isomorphic to the class group of
 field ideals.

\subsection{Genera}
Let $r$ be the number of different odd prime divisors of $D$. For each $D$, and every $m$ relatively prime to $D=\prod_{i=1}^r q_i$, the Jacobi characters are defined as
\begin{equation}
  \label{jacochar}
  \chi_i(m) =\jacobi{m}{q_i}   \hspace{5mm} \left\{  \begin{array}{l} 
      i=1, 2,\ldots , r   \hspace{5mm} , \hspace{5mm} D =1 \bmod 4   \\
      i=~~~~ 2, \ldots , r   \hspace{5mm} , \hspace{5mm}  D = 2,3 \bmod 4 
     \end{array}   \right. \,\,.
\end{equation}
When $D =2,3 \bmod 4$, the missing character $\chi_1(m)$  is defined as follows
 (see \cite[Lemma 5, p.253]{cohn})
$$   \chi_1(m) = \left\{   \begin{array}{lcl}
          \legendre{-1}{|m|} \mbox{sgn}(m) & ~~ &  4D = 12  \bmod 16  \\
          \legendre{2}{m}  & ~~ &  4D = 8  \bmod 32  \\
          \legendre{-2}{|m|} \mbox{sgn}(m)  & ~~ &  4D = 24  \bmod 32  \\
     \end{array}  \right.      \,\,, $$
here $ \mbox{sgn}(m)$ means "the sign of $m$", and when $D <0$, it is understood that $m >0$. \\
The finite set of  classes of quadratic forms is further partitioned into a finite set of subsets, called genera.
Each genus is identified by the same set of Jacobi characters, that is a block of $r$ consecutive $\pm$ signs. Every prime $p$ yielding the same set of characters (\ref{jacochar}), which may be rewritten as $\{ \chi_1(p), \ldots, \chi_r(p)\}$, is represented by some class of quadratic forms in the same genus.
The following theorem of Gauss's specifies the main property of the partition of the class group into genera, (see \cite[Theorem 4, p.234]{cohn} for a proof).

\begin{theorem}              
   \label{gaussth}
If we consider the $h_{\mathbb K}$ proper equivalence classes of forms with discriminant equal to a field discriminant $\Delta$, then they can be subdivided equally into $2^{r-1}$ genera of $\frac{h_{\mathbb K}}{2^{r-1}}$ classes in each genus.
\end{theorem}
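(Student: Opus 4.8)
The plan is to realize the partition into genera as the fibre decomposition of a group homomorphism, and then to compute the order of its image. Write $C(\Delta)$ for the group of proper equivalence classes under Gauss composition, so that $|C(\Delta)|=h_{\mathbb K}$, and define the \emph{total genus character}
$$ \Phi : C(\Delta) \longrightarrow \{\pm 1\}^r, \qquad \Phi([Q]) = \big(\chi_1(Q), \ldots, \chi_r(Q)\big), $$
where $\chi_i(Q):=\chi_i(a)$ for any form $(a,b,c)$ in the class with $\gcd(a,D)=1$ (such a form always exists in the class). Two classes lie in the same genus precisely when they carry the same character block, i.e. precisely when they lie in the same fibre of $\Phi$. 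Thus the number of genera equals $|\mathrm{Im}\,\Phi|$, and the theorem reduces to two claims: (i) every nonempty fibre of $\Phi$ has the same cardinality, and (ii) $|\mathrm{Im}\,\Phi|=2^{r-1}$.

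First I would verify that $\Phi$ is a well-defined homomorphism. Well-definedness on classes follows because properly equivalent forms represent the same integers and each $\chi_i$ is a multiplicative character, so $\chi_i(a)$ is independent of the representative chosen with $\gcd(a,D)=1$. For the homomorphism property, recall that Gauss composition of $Q_1$ and $Q_2$ yields a form $Q_3$ with $Q_1(x_1,y_1)Q_2(x_2,y_2)=Q_3(x_3,y_3)$; hence $Q_3$ represents a product of values represented by $Q_1$ and $Q_2$, and multiplicativity of the Jacobi symbols gives $\chi_i(Q_3)=\chi_i(Q_1)\chi_i(Q_2)$ for every $i$. Therefore $\Phi$ is a homomorphism of abelian groups, its fibres are exactly the cosets of $\ker\Phi$ (the principal genus), and all nonempty fibres have cardinality $|\ker\Phi|=h_{\mathbb K}/|\mathrm{Im}\,\Phi|$. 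This settles claim (i) and shows that each genus contains $h_{\mathbb K}/|\mathrm{Im}\,\Phi|$ classes.

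It remains to prove $|\mathrm{Im}\,\Phi|=2^{r-1}$, which I would do by sandwiching. For the upper bound, quadratic reciprocity (handling the prime $2$ through the special character $\chi_1$ in the cases $D\equiv 2,3\bmod 4$) expresses the product $\prod_{i=1}^r\chi_i(m)$ as a single Kronecker symbol $\left(\frac{\Delta}{m}\right)$. If $p$ is a prime properly represented by a form of discriminant $\Delta$, then Lemma \ref{lagrange} forces $\Delta$ to be a quadratic residue modulo $p$, so $\prod_{i=1}^r\chi_i(p)=+1$. This is one nontrivial linear relation among the coordinates, so $\mathrm{Im}\,\Phi$ is contained in an index-$2$ subgroup $H\subset\{\pm 1\}^r$, giving $|\mathrm{Im}\,\Phi|\le 2^{r-1}$.

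The reverse inclusion is where the real work lies, and it is the step I expect to be the main obstacle. Given any admissible sign block $(\varepsilon_1,\ldots,\varepsilon_r)\in H$, I would use the Chinese Remainder Theorem to encode the conditions $\chi_i(p)=\varepsilon_i$ (together with the relevant congruence at $2$) as a single congruence class modulo $8|D|$, and then invoke Dirichlet's theorem on primes in arithmetic progressions to produce a prime $p$ in that class. Because the block satisfies the product relation defining $H$, one obtains $\left(\frac{\Delta}{p}\right)=1$, so by Lemma \ref{lagrange} this $p$ is represented by some form $Q$ of discriminant $\Delta$, whence $\Phi([Q])=(\varepsilon_1,\ldots,\varepsilon_r)$ by construction. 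Thus $H\subseteq\mathrm{Im}\,\Phi$, forcing $|\mathrm{Im}\,\Phi|=2^{r-1}$. This surjectivity is not elementary: it genuinely requires the analytic input of Dirichlet's theorem (alternatively one could replace it by the principal genus theorem $\ker\Phi=C(\Delta)^2$ together with a $2$-rank computation, but that is of comparable depth). Combining (i) and (ii) yields the stated equal subdivision into $2^{r-1}$ genera of $h_{\mathbb K}/2^{r-1}$ classes each.
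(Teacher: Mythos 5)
The paper does not actually prove this theorem: it is quoted from Cohn with an explicit pointer to \cite[Theorem 4, p.234]{cohn}, so there is no in-paper argument to measure your proposal against. Your outline is the classical proof of the genus theorem and its architecture is sound: realize the genera as the fibres of the total character homomorphism $\Phi$, deduce equal fibre sizes from the coset structure of $\ker\Phi$, bound the image from above by the product relation $\prod_i\chi_i(p)=(\Delta\,|\,p)=+1$ (quadratic reciprocity combined with Lemma~\ref{lagrange}), and establish surjectivity onto the index-two subgroup either by Dirichlet's theorem on primes in progressions or by the principal genus theorem plus the count of ambiguous classes. You are also right to flag the surjectivity as the genuinely deep step; neither of the two routes you name is elementary, and some such input is unavoidable.

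One step is justified too quickly, however. The well-definedness of $\chi_i$ on a class does not follow from ``properly equivalent forms represent the same integers and $\chi_i$ is multiplicative'': multiplicativity alone does not force two different represented values $a,a'$ coprime to $D$ to carry the same character. The standard repair is the completing-the-square identity $4a\,Q(x,y)=(2ax+by)^2-\Delta y^2$, which shows that $4a\,Q(x,y)$ is a square modulo every $q_i\mid D$; taking $Q(x,y)=a'$ coprime to $q_i$ gives $\chi_i(a)\chi_i(a')=\chi_i(4aa')=1$, hence $\chi_i(a)=\chi_i(a')$, with an analogous congruence check modulo $8$ for the character attached to the prime $2$ (and attention to signs when $\Delta>0$, where represented values may be negative and the paper's characters involve $\mbox{sgn}(m)$). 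With that lemma inserted, your argument is complete and is, as far as one can tell, the same proof the paper is implicitly deferring to.
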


\noindent
Genus theory is undoubtedly useful to tackle Problem 2, since it reduces the search to a search within genera. In this connection, the worst case occurs when  $\Delta$ is prime, that is  $r=1$, since there is a single genus with $h_{\mathbb K}$ classes.
It will be seen that the Hilbert class field may be useful to make some distinction within each genus.
In particular, if $\frac{h_{\mathbb K}}{2^{r-1}} \leq 3$, further subdivisions can be obtained by means of the Hilbert class polynomial.
The unresolved cases occur when $\sty \frac{h_{\mathbb K}}{2^{r-1}}$ has some prime factor greater than $3$.

\section{Quadratic fields, Ideals, Elliptic curves}
  \label{sectEC}
The correspondence between quadratic forms and ideals of quadratic fields offers
 an alternative approach to the composition of forms,
 which explicitly discloses the group structure of the classes of quadratic forms. 
Let $a_1, a_2, \ldots, a_s$ be elements of a field  $\mathbb F$. An ideal 
$\mathfrak I=\langle a_1, a_2, \ldots, a_s \rangle$
 of $\mathbb F$ is defined as the set 
 $$\langle a_1, a_2, \ldots, a_s \rangle = \{ a_1x_1+ a_2x_2 +\ldots + a_sx_s:~x_1, \ldots, x_s \in \mathbb O_{\mathbb F}\} \,\,.$$
When we consider a quadratic field $\mathbb K$, any ideal $\mathfrak I$  is identified by a pair of elements
 of $O_{\mathbb K}$, and written as $\langle a, b \rangle$. If $\mathfrak I = \langle  a, ab \rangle$, it is specified by the single element $a$, and is called principal ideal.
Since any principal ideal is of the form $\langle a, ab \rangle$  
and consists of the multiples of a single element
  $a \in \mathbb K$, it is also represented as
  $\langle a \rangle = \{ a x:~x \in \mathbb O_{\mathbb K}\}$. \\
The product of two ideals is defined as
$ \langle a, b \rangle \langle c, d \rangle = \langle ac, ad, bc, bd \rangle $
 and the following simplification rules can be used to reduce the expression with four terms to
 the standard form $\langle A, B \rangle$:
 $$   \begin{array}{l}
     \langle a_1, a_2, a_3, 0 \rangle = \langle a_1, a_2, a_3 \rangle \\
     \langle a_1, a_2, a_3, a_4 \rangle = 
        \langle a_1, a_2+\lambda a_1, a_3+\lambda a_1, a_4+\lambda a_1 \rangle ~~~~ \lambda \in \mathbb K\\
      \end{array}  ~~.
 $$
Since, by definition, $\langle a, b \rangle =\langle b, a \rangle $, and 
$\langle a, b \rangle = \langle a, b+\lambda a \rangle$, with $\lambda \in \mathbb K$,
 these rules can be iteratively applied to produce a canonical form for the ideal
 $\langle a, b \rangle =\langle e \rangle\langle f, g+\sqrt D \rangle$, with $f,g$
 positive rational integers satisfying the condition $g < f$ \cite{edwards}.  
Given an ideal $\mathfrak I=\langle a, b \rangle$ the elements of $\mathbb K$ can be
 partitioned into a finite number of classes. Each class, denoted $c+\mathfrak I$,
 is defined as the set
$$c+\mathfrak I = \{c+ a x+ b  y:~x,y \in \mathbb O_{\mathbb K}\}~~.$$
The norm $N_{\mathbb K}(\mathfrak I)$ of an ideal $\mathfrak I$ is defined as the
 number of distinct classes. When $\mathfrak I$ is represented in canonical form
 $\langle e \rangle\langle f, g+\sqrt D \rangle$, the norm is computed as $|e|f$.  
Using the notion of product, two ideals $\mathfrak I$ and $\mathfrak L$ are
  equivalent if two principal ideals exist such that  $ \langle a \rangle \mathfrak I = \langle b \rangle \mathfrak L$. Thus, all principal ideals
  form a single class $\mathfrak C_0$, and the non-principal ideals are partitioned into
 equivalence classes $\mathfrak C_i$, each class having a representative ideal 
 $\mathfrak I_i =\langle f, g+\sqrt D \rangle$ which is usually chosen to be of minimum norm, i.e. $|f|$
takes the minimum value.  

\begin{proposition}
   \label{ideal1}
Every ideal $\mathfrak I=\langle  \mathfrak a, \mathfrak b  \rangle$
 is associated to an integral primitive quadratic form $Q_{\mathfrak I}(x,y)$ as
\begin{equation}
   \label{qfdef} 
   Q_{\mathfrak I}(x,y)= \frac{\mathfrak a \sigma(\mathfrak a) x^2 +
      (\mathfrak  a\sigma(\mathfrak b)+ \mathfrak b\sigma(\mathfrak a) )x y +
      \mathfrak b \sigma(\mathfrak b)y^2}{N_{\mathbb K}(\mathfrak I)}~~. 
\end{equation}
\end{proposition}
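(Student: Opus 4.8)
The plan is to read the numerator as a norm form and then establish, in turn, that its coefficients are rational integers, that they are divisible by $N_{\mathbb K}(\mathfrak I)$, and that the resulting quotient is primitive. The organising observation is the factorisation
\[
 \mathfrak a\sigma(\mathfrak a)x^2+(\mathfrak a\sigma(\mathfrak b)+\mathfrak b\sigma(\mathfrak a))xy+\mathfrak b\sigma(\mathfrak b)y^2 = (\mathfrak a x+\mathfrak b y)\,\sigma(\mathfrak a x+\mathfrak b y)=N_{\mathbb K}(\mathfrak a x+\mathfrak b y),
\]
so that the numerator is simply the field norm of the generic element $\mathfrak a x+\mathfrak b y$ of $\mathfrak I$. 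From this I would read off that the three coefficients are rational integers: $\mathfrak a\sigma(\mathfrak a)=N_{\mathbb K}(\mathfrak a)$ and $\mathfrak b\sigma(\mathfrak b)=N_{\mathbb K}(\mathfrak b)$ are norms of algebraic integers, while the central coefficient $\mathfrak a\sigma(\mathfrak b)+\mathfrak b\sigma(\mathfrak a)=\mbox{Tr}_{\mathbb K/\mathbb Q}(\mathfrak a\sigma(\mathfrak b))$ is visibly fixed by $\sigma$, hence lies in $\mathbb Q\cap\mathbb O_{\mathbb K}=\mathbb Z$.

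To clear the denominator I would invoke the standard identity $\mathfrak I\,\sigma(\mathfrak I)=N_{\mathbb K}(\mathfrak I)\,\mathbb O_{\mathbb K}$, valid here because $\Delta$ is a fundamental discriminant and $\mathbb O_{\mathbb K}$ is the maximal order. Each numerator coefficient is an $\mathbb O_{\mathbb K}$-combination of the generators $\mathfrak a\sigma(\mathfrak a),\mathfrak a\sigma(\mathfrak b),\mathfrak b\sigma(\mathfrak a),\mathfrak b\sigma(\mathfrak b)$ of the product ideal $\mathfrak I\,\sigma(\mathfrak I)$, so it lies in $N_{\mathbb K}(\mathfrak I)\,\mathbb O_{\mathbb K}$ and can be written $N_{\mathbb K}(\mathfrak I)\gamma$ with $\gamma\in\mathbb O_{\mathbb K}$; being a rational number, $\gamma\in\mathbb Q\cap\mathbb O_{\mathbb K}=\mathbb Z$. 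Hence $N_{\mathbb K}(\mathfrak I)$ divides each coefficient and $Q_{\mathfrak I}(x,y)$ genuinely has integer coefficients.

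For primitivity I would first compute the discriminant of the numerator form, which collapses to the perfect square $(\mathfrak a\sigma(\mathfrak b)-\mathfrak b\sigma(\mathfrak a))^2$. Writing $\mathfrak a=a_0+a_1\omega$ and $\mathfrak b=b_0+b_1\omega$ in the integral basis $\{1,\omega\}$ gives $\mathfrak a\sigma(\mathfrak b)-\mathfrak b\sigma(\mathfrak a)=(a_1b_0-a_0b_1)(\omega-\sigma(\omega))$, and since $(\omega-\sigma(\omega))^2=\Delta$ this discriminant equals $(a_1b_0-a_0b_1)^2\Delta$. Taking $\{\mathfrak a,\mathfrak b\}$ to be a $\mathbb Z$-basis of $\mathfrak I$, the absolute value of the determinant $|a_1b_0-a_0b_1|$ is the index $[\mathbb O_{\mathbb K}:\mathfrak I]=N_{\mathbb K}(\mathfrak I)$, whence $Q_{\mathfrak I}$ has discriminant exactly $\Delta$. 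If $d$ denotes the content of $Q_{\mathfrak I}$, then $Q_{\mathfrak I}/d$ is an integral form of discriminant $\Delta/d^2$, forcing $d^2\mid\Delta$; since $\Delta$ is squarefree or four times a squarefree integer, the only possibilities are $d=1$ and, in the latter case, $d=2$.

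The step I expect to be the main obstacle is exactly this endgame: excluding $d=2$, and ensuring that $\{\mathfrak a,\mathfrak b\}$ really is a $\mathbb Z$-basis so that the index equals $N_{\mathbb K}(\mathfrak I)$ and the discriminant is genuinely $\Delta$. I would dispatch $d=2$ by observing that it would make $Q_{\mathfrak I}/2$ an integral binary form of discriminant $\Delta/4=D$ with $D\equiv 2,3\bmod 4$, which is impossible because every integral binary quadratic form has discriminant $\equiv 0,1\bmod 4$; hence $d=1$ and the form is primitive. The remaining care concerns reducing to a primitive ideal: writing $\mathfrak I=\langle e\rangle\mathfrak J$ with $\mathfrak J$ divisible by no rational integer greater than $1$, one checks that the factor $N_{\mathbb K}(\langle e\rangle)=e^2$ cancels between numerator and $N_{\mathbb K}(\mathfrak I)$, so that $Q_{\mathfrak I}=Q_{\mathfrak J}$ and no generality is lost in assuming $\mathfrak I$ primitive.
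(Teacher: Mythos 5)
Your argument is correct and is substantially more complete than the paper's own proof, which it also reaches by a different route. The paper proves only integrality of the coefficients, and does so by divisibility of norms: since $\mathfrak I$ divides $\langle\mathfrak a\rangle$, $\langle\mathfrak b\rangle$, and hence $\langle\mathfrak a+\mathfrak b\rangle$, the integer $N_{\mathbb K}(\mathfrak I)$ divides $\mathfrak a\sigma(\mathfrak a)$, $\mathfrak b\sigma(\mathfrak b)$, and $N_{\mathbb K}(\mathfrak a+\mathfrak b)$, and the middle coefficient falls out by subtraction; primitivity and the value of the discriminant are never addressed. You instead get the divisibility in one stroke from the identity $\mathfrak I\,\sigma(\mathfrak I)=N_{\mathbb K}(\mathfrak I)\,\mathbb O_{\mathbb K}$ (the two arguments rest on the same underlying fact, but yours packages it at the level of the product ideal), and you then go on to compute that the discriminant of $Q_{\mathfrak I}$ equals $\Delta$ and to rule out a nontrivial content using the standing hypothesis that $\Delta$ is a fundamental discriminant --- precisely the content of the word ``primitive'' in the statement, which the paper asserts without proof. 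Your closing caveat is also well taken rather than pedantic: the paper's $\langle\mathfrak a,\mathfrak b\rangle$ denotes an $\mathbb O_{\mathbb K}$-generating pair, and the discriminant computation (hence primitivity, and indeed nondegeneracy --- take $\mathfrak a=\mathfrak b$ to see the formula fail) genuinely requires $\{\mathfrak a,\mathfrak b\}$ to be a $\mathbb Z$-basis of $\mathfrak I$ with $|a_1b_0-a_0b_1|=[\mathbb O_{\mathbb K}:\mathfrak I]=N_{\mathbb K}(\mathfrak I)$; making that assumption explicit repairs an imprecision in the statement. In short, where the paper's proof buys only integrality cheaply, yours delivers the full claim.
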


\begin{proof}
The coefficients of $Q_{\mathfrak I}(x,y)$ are integers, because
 $\mathfrak I$ strictly divides both $\langle  \mathfrak a \rangle$ and
 $\langle  \mathfrak b \rangle$  by definition, thus $\mathfrak I$ divides
 $\langle  \mathfrak a+\mathfrak b \rangle$, and it follows that 
 $N_{\mathbb K}(\mathfrak I)$ strictly divides $\mathfrak a \sigma(\mathfrak a)$,
 $\mathfrak b \sigma(\mathfrak b)$,
         and $(\mathfrak  a \sigma(\mathfrak b)+ \mathfrak b \sigma(\mathfrak a) )$.
Note that, in this correspondence between quadratic forms and ideals, 
 the product of ideals corresponds to the composition of the corresponding quadratic forms 
 \cite{cohen2}. 
\end{proof}

\noindent
A property of this correspondence between ideals and quadratic forms is expressed as a lemma. 
 
\begin{lemma}[\cite{cohen2}]
   \label{lem1}
Equivalent ideals are associated, through equation (\ref{qfdef}), to  
 quadratic forms of the same class.
\end{lemma}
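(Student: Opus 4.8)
The plan is to read the numerator of (\ref{qfdef}) as a relative norm and then to track how that norm transforms under the two operations that generate ideal equivalence: a change of $\mathbb{Z}$-basis of the ideal, and multiplication by a principal ideal. First I would observe that, since $x,y\in\mathbb{Z}$ are fixed by $\sigma$, one has $\sigma(\mathfrak{a}x+\mathfrak{b}y)=\sigma(\mathfrak{a})x+\sigma(\mathfrak{b})y$, so that the numerator factors as $(\mathfrak{a}x+\mathfrak{b}y)\,\sigma(\mathfrak{a}x+\mathfrak{b}y)=N_{\mathbb{K}/\mathbb{Q}}(\mathfrak{a}x+\mathfrak{b}y)$, whence $Q_{\mathfrak{I}}(x,y)=N_{\mathbb{K}/\mathbb{Q}}(\mathfrak{a}x+\mathfrak{b}y)/N_{\mathbb{K}}(\mathfrak{I})$. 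In this guise $Q_{\mathfrak{I}}$ is manifestly the norm form of $\mathfrak{I}$ written in the ordered basis $\{\mathfrak{a},\mathfrak{b}\}$, and every transformation below becomes transparent.

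Next I would check that $Q_{\mathfrak{I}}$ is well defined up to proper equivalence. Viewing $\{\mathfrak{a},\mathfrak{b}\}$ as an oriented $\mathbb{Z}$-basis of $\mathfrak{I}$, any other basis is obtained through a matrix $M\in GL(2,\mathbb{Z})$ with rows $(p,q)$ and $(r,s)$; substituting $\mathfrak{a}'=p\mathfrak{a}+q\mathfrak{b}$ and $\mathfrak{b}'=r\mathfrak{a}+s\mathfrak{b}$ and using multiplicativity of the norm gives $N_{\mathbb{K}/\mathbb{Q}}(\mathfrak{a}'x+\mathfrak{b}'y)=N_{\mathbb{K}/\mathbb{Q}}(\mathfrak{a}(px+ry)+\mathfrak{b}(qx+sy))$, so that the new form is $Q_{\mathfrak{I}}$ precomposed with the substitution $(x,y)\mapsto(px+ry,\,qx+sy)$, whose determinant is $\det M$. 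Hence bases related by $SL(2,\mathbb{Z})$ yield properly equivalent forms, precisely the equivalence used to define a class; a short computation of $B^2-4AC=(\mathfrak{a}\sigma(\mathfrak{b})-\mathfrak{b}\sigma(\mathfrak{a}))^2/N_{\mathbb{K}}(\mathfrak{I})^2=\Delta$ confirms that the discriminant is preserved, and fixing the orientation pins down the sign of $\mathfrak{a}\sigma(\mathfrak{b})-\mathfrak{b}\sigma(\mathfrak{a})$.

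For the equivalence itself, I would first reduce $\langle\alpha\rangle\mathfrak{I}=\langle\beta\rangle\mathfrak{L}$ to $\mathfrak{L}=\gamma\mathfrak{I}$ with $\gamma=\alpha/\beta\in\mathbb{K}^{\times}$. Then $\{\gamma\mathfrak{a},\gamma\mathfrak{b}\}$ is a $\mathbb{Z}$-basis of $\mathfrak{L}$, and multiplicativity yields $N_{\mathbb{K}/\mathbb{Q}}(\gamma\mathfrak{a}x+\gamma\mathfrak{b}y)=N_{\mathbb{K}/\mathbb{Q}}(\gamma)\,N_{\mathbb{K}/\mathbb{Q}}(\mathfrak{a}x+\mathfrak{b}y)$ together with $N_{\mathbb{K}}(\mathfrak{L})=|N_{\mathbb{K}/\mathbb{Q}}(\gamma)|\,N_{\mathbb{K}}(\mathfrak{I})$, so that in these bases $Q_{\mathfrak{L}}=\mbox{sgn}(N_{\mathbb{K}/\mathbb{Q}}(\gamma))\,Q_{\mathfrak{I}}$. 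Combined with the previous paragraph, this already proves the lemma whenever $N_{\mathbb{K}/\mathbb{Q}}(\gamma)>0$.

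The sign is the crux, and I expect it to be the only genuine obstacle. When $\Delta<0$ the field is imaginary, $N_{\mathbb{K}/\mathbb{Q}}(\gamma)=|\gamma|^{2}>0$ automatically, and nothing further is needed. When $\Delta>0$ the norm of $\gamma$ may be negative and one is left comparing $Q_{\mathfrak{I}}$ with $-Q_{\mathfrak{I}}$, which is exactly where the narrow-versus-wide class group distinction enters. The resolution I would pursue is that multiplication by a $\gamma$ of negative norm reverses the orientation of the basis $\{\gamma\mathfrak{a},\gamma\mathfrak{b}\}$, so restoring the fixed orientation costs a determinant $-1$ change of basis; tracking this carefully returns the form to the proper-equivalence class of $Q_{\mathfrak{I}}$ (equivalently, one fixes the orientation convention so that only $\gamma$ with $N_{\mathbb{K}/\mathbb{Q}}(\gamma)>0$ occur, or works throughout with the narrow class group, as in \cite{cohen2}). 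Every other step is a formal manipulation of norms; the orientation and sign bookkeeping in the indefinite case is where the care must concentrate.
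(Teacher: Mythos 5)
Your proposal is correct and rests on the same core mechanism as the paper's proof: an ideal equivalent to $\mathfrak I$ is $\gamma\mathfrak I$ for some $\gamma\in\mathbb K^{\times}$, the numerator of (\ref{qfdef}) picks up a factor $\gamma\sigma(\gamma)=N_{\mathbb K/\mathbb Q}(\gamma)$, the denominator picks up $N_{\mathbb K}(\langle\gamma\rangle)$, and the two cancel. The paper stops there, in effect asserting that the form is literally unchanged. You go further in two respects that the paper silently skips: (i) you verify that the form attached to $\mathfrak I$ is well defined up to proper equivalence under a change of $\mathbb Z$-basis of the ideal, which is needed for the statement to make sense at the level of classes at all, since (\ref{qfdef}) depends on the chosen generators $\mathfrak a,\mathfrak b$; and (ii) you note that $N_{\mathbb K}(\langle\gamma\rangle)=|N_{\mathbb K/\mathbb Q}(\gamma)|$, so for real quadratic fields a $\gamma$ of negative norm produces $-Q_{\mathfrak I}$, and one must either absorb the sign by an orientation-reversing basis change or restrict to the narrow class group. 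The paper's identity $N_{\mathbb K}(\langle\mathfrak e\rangle\mathfrak I)=\mathfrak e\sigma(\mathfrak e)N_{\mathbb K}(\mathfrak I)$ is only correct up to this sign, so your sign bookkeeping is not pedantry but repairs an actual gap in the indefinite case; for $\Delta<0$ the two arguments coincide.
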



\begin{proof} 
An ideal, equivalent to $\mathfrak I=\langle \mathfrak a, \mathfrak b \rangle$, is obtained
 as $\langle \mathfrak e \rangle \mathfrak I$ for $\mathfrak e \in \mathbb K$. 
The conclusion follows because 
 $N_{\mathbb K}(\langle \mathfrak e \rangle\mathfrak I) =
 \mathfrak e \sigma(\mathfrak e)N_{\mathbb K}(\mathfrak I)$
  is a direct consequence of $\mathfrak  a \mathfrak  f \sigma(\mathfrak b\mathfrak f)=
     \mathfrak a \sigma(\mathfrak b)\mathfrak f\sigma(\mathfrak f)$ and 
  the chain of identities
$$ N_{\mathbb K}(\langle  \mathfrak f \rangle\mathfrak I)=
   N_{\mathbb K}(\langle  \mathfrak f \rangle)N_{\mathbb K}(\mathfrak I)=
   \mathfrak f\sigma(\mathfrak f) N_{\mathbb K}(\mathfrak I) ~~.$$
\end{proof}

\subsection{Ideals and Hilbert class fields}
The Hilbert class field $\mathbb L$ of $\mathbb K$ is an unramified
 extension of degree $h_{\mathbb K}$ such that every non-principal
 ideal of $\mathbb K$ becomes principal \cite[Theorem 4.18, p.189]{nark}. In 
quadratic fields $\mathbb K=\mathbb Q(\sqrt{\Delta})$, it is well known that rational primes either ramify, remain inert, or split into the product of two ideals, which may be principal or non-principal. The following theorem specifies exactly what occurs in the Hilbert class field, and may be seen as a corollary of \cite[Theorem 5.5, p.391]{neukirch},
 or as a re-formulation of \cite[Corollary 4.121, p.250]{mollin} in a form useful to the aims of this paper. 
 
\begin{theorem}
   \label{artin}
 Every rational prime $p$ that splits in $\mathbb K$ (i.e. the ideal $\langle p \rangle$
  splits into a pair of principal ideals) fully splits
  in the Hilbert class field of $\mathbb K$.
\end{theorem}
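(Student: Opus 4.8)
The plan is to invoke the main results of class field theory for the Hilbert class field, which identify the splitting behaviour of primes with the structure of the ideal class group. Specifically, Artin reciprocity provides a canonical isomorphism between the ideal class group $\mathrm{Cl}(\mathbb{K})$ and the Galois group $\mathrm{Gal}(\mathbb{L}/\mathbb{K})$, under which the class of an unramified prime $\mathfrak{p}$ of $\mathbb{K}$ is sent to its Frobenius automorphism $\left(\frac{\mathbb{L}/\mathbb{K}}{\mathfrak{p}}\right)$. Since $\mathbb{L}/\mathbb{K}$ is everywhere unramified (as $\mathbb{L}$ is the Hilbert class field), every prime of $\mathbb{K}$ is unramified in $\mathbb{L}$, so this symbol is always defined, and the decomposition law reads: $\mathfrak{p}$ splits completely in $\mathbb{L}$ if and only if its Frobenius is the identity, which by the isomorphism happens exactly when $[\mathfrak{p}]$ is the trivial class, i.e. when $\mathfrak{p}$ is principal.

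With this tool in hand, I would argue as follows. By hypothesis the rational prime $p$ splits in $\mathbb{K}$, so that $\langle p \rangle = \mathfrak{p}\,\sigma(\mathfrak{p})$ with $\mathfrak{p}$ and $\sigma(\mathfrak{p})$ both principal ideals of residue degree one over $\mathbb{Q}$. (Note that if $\mathfrak{p} = \langle \alpha \rangle$ then $\sigma(\mathfrak{p}) = \langle \sigma(\alpha)\rangle$, so principality of one factor forces it on the other.) Because $[\mathfrak{p}] = [\sigma(\mathfrak{p})] = 1$ in $\mathrm{Cl}(\mathbb{K})$, the decomposition law recalled above shows that each of $\mathfrak{p}$ and $\sigma(\mathfrak{p})$ splits completely in the extension $\mathbb{L}/\mathbb{K}$ of degree $h_{\mathbb{K}}$; that is, each factors into $h_{\mathbb{K}}$ distinct primes of $\mathbb{L}$, all of relative residue degree one and all unramified.

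Finally, I would pass from the tower $\mathbb{Q} \subset \mathbb{K} \subset \mathbb{L}$ to the conclusion by multiplicativity of ramification indices and residue degrees. The prime $p$ contributes ramification index and residue degree both equal to one in $\mathbb{K}/\mathbb{Q}$, and each prime above it contributes the same in $\mathbb{L}/\mathbb{K}$; hence every prime of $\mathbb{L}$ lying over $p$ has ramification index and residue degree one over $\mathbb{Q}$. Counting, there are exactly $2 h_{\mathbb{K}} = [\mathbb{L}:\mathbb{Q}]$ such primes, which is precisely the statement that $p$ splits completely (``fully splits'') in the Hilbert class field. The only genuinely non-elementary ingredient is the decomposition law of the first paragraph, namely the identification of the Frobenius with the Artin symbol together with the equivalence between the trivial class and complete splitting; this is exactly the class-field-theoretic content imported from \cite{neukirch,mollin}, and I would expect it to be the crux rather than the elementary tower bookkeeping.
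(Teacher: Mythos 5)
Your proof is correct and takes essentially the same approach as the paper, which states the theorem without its own proof, deferring precisely to the class-field-theoretic decomposition law for the Hilbert class field cited from \cite{neukirch} and \cite{mollin}. Your write-up simply makes that deduction explicit: the Artin symbol of $\mathfrak p$ is trivial iff $\mathfrak p$ is principal, hence $\mathfrak p$ splits completely in $\mathbb L/\mathbb K$, and multiplicativity of ramification indices and residue degrees in the tower $\mathbb Q\subset\mathbb K\subset\mathbb L$ yields $2h_{\mathbb K}$ primes of $\mathbb L$ above $p$, each with $e=f=1$.
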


\noindent
The field $\mathbb L$ is specified by a root
 $\alpha$ of an irreducible polynomial $h_{\mathbb K}(x)$ of degree $h_{\mathbb K}$ over $\mathbb K$, the Hilbert class polynomial.
The Galois group $\mathfrak G(\mathbb L/\mathbb K)$ of $\mathbb L$ over
 $\mathbb K$ is isomorphic to the ideal class group, thus it is Abelian and coincides
 with the Galois group of the Hilbert class polynomial $h_{\mathbb K}(x)$ with respect to $\mathbb K$; in particular, $h_{\mathbb K}(x)$ is solvable by radicals. 
 $\mathbb L$ is a normal extension of $\mathbb Q$ defined by a root
 of an irreducible polynomial $H_{\mathbb K}(x)$ over $\mathbb Q$ of degree $2h_{\mathbb K}$. 
In particular, dealing with imaginary quadratic fields, there is an interesting connection between lattices, elliptic curves, elliptic functions, and a special Hilbert polynomial defining $\mathbb L$.  

\subsection{Imaginary quadratic forms and Elliptic curves}
Let $\{ 1, \omega \}$ be an integral basis for $\mathbb K$. Consider the lattice $\Gamma (\Delta) = \{ n+ m \omega | n, m \in \mathbb Z \}$, which is identified with the maximal order $\mathfrak O_{\mathbb K}$, and is left invariant by the modular $PSL(2,\mathbb Z)$ of $2 \times 2$ matrices with integer coefficients and unit determinant.
Besides this natural group of endomorphisms, the lattice is also left invariant by a proper complex factor $\theta$, that is $\theta \Gamma(\Delta)=\Gamma(\Delta)$; i.e.
 $\{ 1, \omega \}$ and  $\{ \theta, \theta \omega \}$ generate the same lattice. \\
The lattice is linked to the Weierstrass function $\wp(z)$, which is a doubly periodic function, that is
$\wp(z+1)=\wp(z+\omega)=\wp(z)$, and satisfies the differential equation
$  \left( \frac{d\wp(z)}{dz} \right)^2=  4\wp(z)^3-g_2\wp(z)-g_3 $.
This equation shows that $\wp(z)$ can be used to parametrize the elliptic curve 
 $\mathbb E(\mathbb C|\jmath_0)$ of equation
$$  y^2 =4x^3-g_2x-g_3  \hspace{5mm}    x,y \in \mathbb C  \,\,,     $$
defined over the complex field $\mathbb C$, i.e.  $(x,y)=(\wp(z), \wp(z)')$.  The constants $g_2$ and $g_3$ depend
only on the lattice $\Gamma(\Delta)$, thus they are invariant under the endomorphisms of the lattice, and may be used to define  two special invariants: namely, the form $ D(\Gamma)= g_2^3-27g_3^2$, which is the discriminant of the cubic polynomial divided by $16$,
and the  $\jmath$-invariant $ \jmath(\Gamma) = \frac{1728 g_2^3}{D(\Gamma)}$, which is also
invariant under certain scale transformations of the elliptic curve. The factor $1728$
in the definition of $\jmath$ serves to make the coefficients in the following equation into integers,
which is useful for its computation  \cite[p.86]{chandra}, \cite{weber}, or \cite{schoof}:
%
%
 \begin{equation}
   \label{Jchandra}
  \jmath  = \frac{1}{q^2} + 744  +  \sum_{n=1}^{\infty}  b_n q^{2n}  \hspace{5mm}   b_n \in \mathbb Z  \,\,,  
 \end{equation} 
and $z=\omega$ in $q=e^{2\pi i z} $ is related to the complex multiplier. \\
The elliptic curve is connected, via the lattice $\Gamma(\Delta)$, to the  quadratic form $x^2+(\omega+\bar \omega) xy+\omega\bar \omega y^2 $
which is principal, and defines a metric on the lattice.
The links between imaginary quadratic  fields, quadratic forms, and  elliptic curves  are illustrated in Figure \ref{fig1}. 
 Two points should be noted:
\begin{enumerate}
 \item The equation of  an  elliptic curve
 $\mathbb E(\mathbb L|\jmath_0)$  with complex multiplication in $\mathfrak O_{\mathbb K}$, 
and with assigned $\jmath$-invariant $\jmath_0 \in \mathbb L$, may be written as
 \begin{equation}
   \label{jinv}
   y^2= 4x^3 + \frac{27\jmath_0}{1728-\jmath_0} x +
                \frac{27\jmath_0}{1728-\jmath_0}   ~~~  ~~~,
 \end{equation}  
if  $\jmath_0 \neq 0,  ~1728$; while if the $\jmath$-invariants are $0$ and $1728$, the elliptic curves,  clearly defined over $\mathbb Q$,  have equations $y^2=4x^3+1$ and $y^2=4x^3+x$, respectively. 
  \item The $\jmath$-invariants are algebraic numbers that are roots of the Hilbert class polynomial $h_{\mathbb K}(x)$, a peculiar polynomial defining the Hilbert class field $\mathbb L$ of $\mathbb K$. 
The $\jmath$-invariants can be computed from equation (\ref{Jchandra}) and may be approximated 
with sufficient precision, as described in  \cite{schoof},  with complexity $O(1)$ with respect to $\Delta$.  Schoof also showed that 
$h_{\mathbb K}$ can be computed in $O(\Delta^{1+\epsilon})$, and the Hilbert polynomial 
 $h_{\mathbb K}(x)$ can be computed with complexity $O(\Delta^{2.5+\epsilon})$ for every $\epsilon > 0$. 
\end{enumerate}

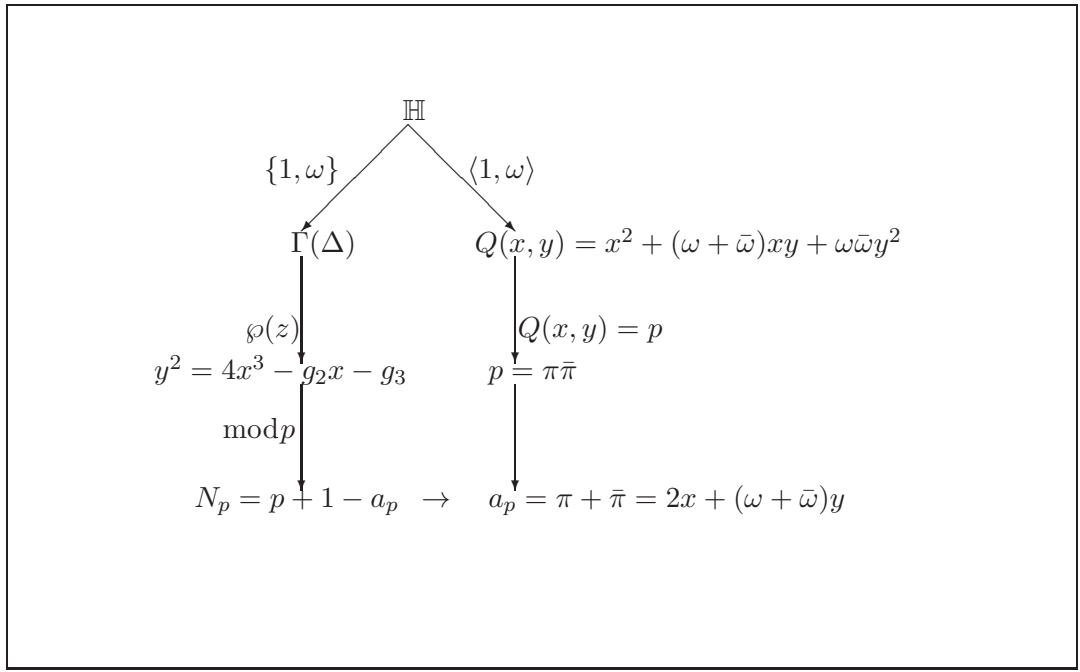
\begin{figure}
\begin{picture}(100,200)(0,0)
\put(45,5){\framebox(400,250)}
\put(193,212){$\mathbb H$}

\put(195,210){\vector(1,-1){40}}
\put(195,210){\vector(-1,-1){40}}
\put(217,190){\parbox{30mm} {$\langle 1,\omega \rangle$} }
\put(141,190){\parbox{30mm}{$\{1,\omega \}$} }
\put(151,162){$\Gamma(\Delta)$}
\put(220,162){$Q(x,y)=x^2+(\omega+\bar \omega)xy+\omega \bar \omega y^2$}
\put(155,160){\vector(0,-1){40}}
\put(235,160){\vector(0,-1){40}}
\put(134,130){$\wp(z)$}
\put(236,130){$Q(x,y)=p$}
\put(100,114){$y^2=4x^3-g_2x-g_3$}
\put(225,114){$p=\pi \bar \pi$}
\put(155,112){\vector(0,-1){40}}
\put(235,112){\vector(0,-1){40}}
\put(125,92){$\bmod p$}
%
\put(115,66){$N_p=p+1-a_p$}
\put(225,66){$a_p=\pi + \bar \pi=2x+(\omega+\bar \omega)y$}
\put(200,66){$\rightarrow$}

\end{picture}
\caption{Connections between elliptic curves and quadratic forms via imaginary quadratic fields}
\label{fig1}
\end{figure}

\section{Solving $Q(x,y)=m$}
Given an odd prime $p$ such that $\legendre{\Delta}{p}=1$, a form
 with discriminant $\Delta$ trivially representing $p$ is obtained as  $\left( p, b_1, \frac{b_1^2-\Delta}{4p} \right)$, where $b_1$ is
 a solution of the modular equation $b^2= \Delta  \bmod p$ such that $b_1^2-\Delta$ is divisible by $4$. 

\paragraph{Square roots.}
Consider the computation of a square root of  $ \Delta  \bmod p$. 
When $p = 3 \bmod 4$, the task is feasible with deterministic polynomial complexity, since we have 
$\sqrt \Delta = \Delta^{\frac{p+1}{4}} \bmod p$. When $p = 1 \bmod 4$ the task is harder. 
However, an efficient probabilistic method is offered by the Cantor-Zassenhaus
   algorithm for factoring polynomials over finite fields \cite{cantor}, which performs the task with complexity $O(\log_2 p)$. 
Alternatively, assuming that the Hilbert class polynomial of $\mathbb Q(\sqrt \Delta)$ is known, the square root
can be computed with deterministic polynomial complexity of order $O((\log_2 p)^9)$ \cite{schoof} by means of Schoof's algorithm, which counts the number of rational points of an elliptic curve over a finite field
$\mathbb F_q$. For the sake of easy reference, the way in which Schoof's algorithm is exploited is briefly recalled.\\   
Assume that $\Delta$ is negative and let $\mathbb{L}$ be the Hilbert class field of the imaginary quadratic field $\mathbb{Q}(\sqrt{\Delta})$. An  elliptic curve $\mathbb E(\mathbb L|\jmath_0)$ modulo a prime
 ideal  $ \mathfrak p \subset \mathcal{O}_{\mathbb{L}}$, which is a factor of the ideal $p \mathcal{O}_{\mathbb{L}}$, is an
 elliptic curve  $\mathbb E(\mathcal{O}_{\mathbb L}/\mathfrak p|\jmath_0)$ over the Galois
 field $\mathbb F_q=\mathcal{O}_{\mathbb L}/\mathfrak p$ of remainders modulo $\mathfrak p$. Let $\kappa$ denote a prime ideal factor of $p  \mathcal{O}_{\mathbb K}$, and let $\ell_{\mathfrak p}$  be the minimum factor of $h_{\mathbb K}$ such that 
 $\kappa^{\ell_{\mathfrak p}}$ is a principal ideal in $\mathbb K$. 
The order of $\mathbb F_q$ is equal to  $N_{\mathbb L}(\mathfrak p)=p^{\ell_{\mathfrak p}}$.
When $\ell_{\mathfrak p} >1$, the Hilbert class polynomial, or one of its irreducible factors modulo $p$ (of degree $\ell_{\mathfrak p}$), is used to define $\mathbb F_q$.
The number of $\mathbb F_q$-points on $\mathbb E(\mathbb F_q|\jmath_0)$ 
 is $N_{q}=|E(\mathbb F_q|\jmath_0)|=q+1-a_{q}$, with $|a_q| \leq 2\sqrt{q}$. 
A theorem of Deuring's \cite[Theorem 14.16, p. 317]{cox} or
  \cite[Ch.13, Theorem 12]{lang} establishes the existence of an element $\pi \in \mathcal{O}_{\mathbb K}$ such that 
   $q=\pi \bar{\pi}$, and the identity
$ N_q = q+1-(\pi+\bar{\pi})= (\pi-1)(\bar{\pi}-1) $,
 thus we have
\begin{equation}
   \label{deuring}
 a_q=\pi+\bar{\pi}  ~.
\end{equation} 
  Writing $\pi = a+ b \omega$, it is immediately seen that $2a+b(\omega+\bar \omega)=a_q$, which,  together with $q=a^2+(\omega+\bar \omega)ab+(\omega \bar \omega)b^2$,
  allows $a$ and $b$ to be computed as

\begin{equation}
  \label{class1}
(a,b)= \left\{  \begin{array}{lcl}
  \sty  \left(\frac{a_q-b}{2} \hspace{1mm},\hspace{1mm}\sqrt{\frac{4q-a_q^2}{-\Delta}}~ \right) &~~ &  \Delta = 1 \hspace{1mm} \bmod \hspace{1mm} 4  \\
   \\ 
\sty    \left(\frac{a_q}{2} \hspace{1mm},\hspace{1mm} \sqrt{\frac{4q-a_q^2}{-\Delta}} ~ \right)    &~~ &  \Delta = 0 \hspace{1mm} \bmod \hspace{1mm} 4  \\
   \end{array}  \right.
\end{equation}
Notice that, considering the expression for $b$ modulo $p$, 
we get $\sqrt{\Delta}  \bmod p$ as
$$     \sqrt{\Delta}   = \pm \frac{a_{q}}{b}    \bmod p   \,\,.$$
{\bf Remark.} When $\Delta$ is positive, this formula can be used to compute $\sqrt{-\Delta} \bmod p$. Further, since 
$p=1 \bmod 4$, the equation $p=x_o^2+y_o^2$ gives $\sqrt{-1}$ as $\frac{x_o}{y_o} \bmod p$, and finally  $\sqrt{\Delta} = \frac{x_o}{y_o} \sqrt{-\Delta} \bmod p$ is obtained. 

\vspace{3mm}
\noindent
{\bf Remark.} In his book \cite{cox}, David Cox debates the general problem of establishing whether $p$
 is represented by a principal quadratic form of the kind $x^2+n y^2$, where $n$ is a natural number possibly divisible by squares. 
His conclusion \cite[Theorem 9.2, p. 180]{cox} is that $p$ is represented by a principal form of
  discriminant $\Delta < 0$ if and only if $\legendre{\Delta}{p}=1$, 
  and there is an irreducible monic polynomial $f_{\Delta}(x)$ with integer coefficients
 that has a linear factor modulo $p$.
The above arguments illustrate the situation also when $p$ is representable by a non-principal quadratic form. The conclusions are more general than Cox's, but are limited to (square-free) field discriminants.


\subsection{An algorithm}
Solving equation (\ref{main1}), i.e. solving Problem 1, is a different matter compared to the problem solved in the previous section, because the quadratic form is given.  However, an algorithm can be devised \cite{venkov} which is based on the above procedure and  Gauss's reduction algorithm \cite{mathews}. \\
Given a quadratic form $ax^2+bx y+cy^2$ and an integer  $m=\prod_{i=1}^s p_i^{\alpha_i}$ decomposed into its prime power factors, the following algorithm returns a representation $(x_o,y_o)$ of $m$ by $(a,b,c)$, or a failure if $m$ cannot be represented.
\begin{center}
{\bf    Algorithm   G}
\end{center}
\begin{description}
  \item[Step 1:] Via Gauss's reduction algorithm, find a reduced quadratic form $AX^2+BX Y+CY^2$ properly equivalent to $ax^2+bx y+cy^2$ and the corresponding linear transformation $T_1$ $$     
  \left[  \begin{array}{c}   
                       x   \\
                       y
           \end{array}    \right]  =
           \left[  \begin{array}{cc}   
                       \alpha  &  \beta  \\
                      \gamma & \delta
           \end{array}    \right]
\left[  \begin{array}{c}   
                       X   \\
                       Y
           \end{array}    \right].
$$
  \item[Step 2:] For every $i$ between $1$ and $s$, 
     \begin{itemize}
       \item[1.] Find two reduced forms, $(a_i,b_i,c_i)$ and $(a_i',b_i',c_i')$, consisting of a pair of reciprocal forms, representing $p_i$ (actually the form is only one if the two forms belong to the same proper equivalence class), and the corresponding representations, $(x_i,y_i)$ and $(x_i',y_i')$, with positive $x$-coordinates.
       \item[2.] Compute all compositions of $\alpha_i$ forms consisting of $k$  forms   $(a_i,b_i,c_i)$ and $\alpha_i -k$ forms $(a_i',b_i',c_i')$, for every $k$ from $0$ to $\alpha_i$, and find the corresponding representations of $p_i^{\alpha_i}$. The resulting forms are exactly $\alpha_i+1$.
    \end{itemize}
  \item[Step 3:]   Compose $s$ reduced forms, one per every $p_i^{\alpha_i}$, in all possible ways, and enumerate each resulting composed form by the index $\ell$, to get
  the forms $g_\ell x^2+f_\ell x y+h_\ell y^2$, and obtain all possible representations
$(x^{(\ell)}_m,y^{(\ell)}_m)$ of $m$  by these forms  ( see Appendix A).
    \item[Step 4:] For every $\ell$ find a reduced quadratic form $G_{\ell}X^2+F_{\ell}X Y+H_{\ell}Y^2$ properly equivalent to $g_\ell x^2+f_\ell x y+h_\ell y^2$, and the corresponding representation  $(X^{(\ell)}_m,Y^{(\ell)}_m)$  of $m$ by this reduced form. 
    \item[Step 5:] Find an $\ell$, if one exists, such that $G_{\ell}X^2+F_{\ell}X Y+H_{\ell}Y^2$ is properly or improperly equivalent to $Ax^2+Bxy+Cy^2$ and the corresponding linear transformation $T_2$,       that is
$$     \left[  \begin{array}{c}   
                      X  \\
                       Y
           \end{array}    \right]  =
           \left[  \begin{array}{cc}   
                       \alpha_2 &  \beta_2  \\
                      \gamma_2 & \delta_2
           \end{array}    \right]
\left[  \begin{array}{c}   
                       x  \\
                       y
           \end{array}    \right]   ~~.
$$ 
       Otherwise STOP and output FAILURE.  
\item[Step 6:]  Combine $T_1$ and $T_2$, and finally obtain the representation of $m$ by 
   $ax^2+bx y+cy^2$
as
$$     \left[  \begin{array}{c}   
                      x_o  \\
                       y_o
           \end{array}    \right]  =
         T_1  T_2^{-1} 
\left[  \begin{array}{c}   
                       X^{(\ell)}_m  \\
                       Y^{(\ell)}_m
           \end{array}    \right] ~~.
$$
\end{description}

\noindent
Gauss's reduction algorithm may found reported in a clever form in \cite[p. 69-74 ]{mathews}, 
distinguishing between positive and negative $\Delta$, since the algorithm forms are slightly different. \\
The Algorithm G works for both positive and negative discriminants, although, for the case of positive discriminants, it may require a burden of computations to determine whether a reduced form $G_{\ell}X^2+F_{\ell}X Y+H_{\ell}Y^2$ stays in the same equivalence class of $Ax^2+Bx y+Cy^2$, which is trivial for negative discriminants. \\
Algorithm G gives at least a representation of $m$, provided that one exists. In general, the
 number of representations of $m$  is larger than $2$ when $m$ is composite.

\section{An alternative method} 
   \label{sect5}
The connection between binary quadratic forms and elliptic curves, briefly mentioned above, provides the setting necessary to describe an alternative, and in some ways more direct, method for solving the equation $Q(x,y)=p$, with $p$ an odd prime and $Q(x,y)$ a reduced quadratic form of discriminant $\Delta$, when solvable (i.e. solve part 2 of Problem 1). The method exploits Proposition \ref{ideal1}.  \\
Assume that $h_{\mathbb K}$ reduced quadratic forms $Q_0(x,y),Q_1(x,y),\dots,Q_{h_{\mathbb{K}}-1}(x,y)$, with $\mathbb{K}=\mathbb{Q}(\sqrt{\Delta})$ representative of each proper equivalence class of quadratic forms of  discriminant $\Delta$, where $Q_0(x,y)$ denotes the principal form. The following theorem allows us to find representations of $p$ in a different way from the procedure based on Gauss's reduction.

\begin{theorem}
\label{lab2}
If the odd prime $p$ is represented by some reduced form $Q_i(x,y)$ of discriminant $\Delta$ are known, the ideal $p\mathcal{O}_{\mathbb{K}}$, with 
$\mathbb{K}=\mathbb{Q}(\sqrt{\Delta})$, splits into two ideals, principal or non-principal, $\langle p \rangle = \mathfrak P \bar \mathfrak P$ and two integers $x_0,y_0$, such that $Q_i(x_0,y_0)=p$, can be obtained from a Diophantine system of two equations of degree $\ell_{p}$
$$  \left\{  \begin{array}{l}
           a_1(x,y) =u   \\
           a_2(x,y) =v   \\
    \end{array}   \right.  \,\,
$$
 where $\ell_{p}$ is the minimum divisor of $h_{\mathbb{K}}$ such that $ \mathfrak P^{\ell_{p}}$ is principal, and $a_1(x,y)$, $a_2(x,y)$ are homogeneous polynomials of 
 degree $\ell_{p}$. 
The two integers $u,v$ are a solution of $p^{\ell_{p}}=Q_0(u,v)$, the sign of $u$ being selected appropriately. 

\end{theorem}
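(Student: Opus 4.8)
The plan is to recast the representation condition through the ideal--form correspondence of Proposition \ref{ideal1} and then to use that the class of $\mathfrak P$ has order exactly $\ell_p$ in the ideal class group. Let $\mathfrak P = \langle \mathfrak a, \mathfrak b\rangle$ be the ideal associated with $Q_i$ through equation (\ref{qfdef}), so that $Q_i(x,y) = N_{\mathbb K}(\mathfrak a x + \mathfrak b y)/N_{\mathbb K}(\mathfrak P)$; since $p$ is represented by a form of discriminant $\Delta$, Lemma \ref{lagrange} gives that $\langle p\rangle = \mathfrak P\bar{\mathfrak P}$ splits, whence $N_{\mathbb K}(\mathfrak P) = p$. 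Because $\ell_p$ is the least divisor of $h_{\mathbb K}$ for which $\mathfrak P^{\ell_p}$ is principal, I would write $\mathfrak P^{\ell_p} = \langle \pi\rangle$, so that $N_{\mathbb K}(\pi) = N_{\mathbb K}(\mathfrak P)^{\ell_p} = p^{\ell_p}$. Expressing $\pi = u + v\omega$ in the basis $\{1,\omega\}$ and recalling that the norm form of a principal ideal is precisely $Q_0$, one gets $Q_0(u,v) = N_{\mathbb K}(\pi) = p^{\ell_p}$, which is the stated equation satisfied by the pair $(u,v)$.

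Next I would construct the homogeneous forms $a_1, a_2$. Treating $x,y$ as indeterminates, the element $\mathfrak a x + \mathfrak b y$ lies in $\mathfrak P$, so its $\ell_p$-th power lies in $\mathfrak P^{\ell_p} = \pi\,\mathcal O_{\mathbb K}$; in fact each monomial $\mathfrak a^{k}\mathfrak b^{\,\ell_p-k}$ of the expansion already lies in $\mathfrak P^{\ell_p}$ (the integer binomial factors being irrelevant), hence its quotient by $\pi$ lies in $\mathcal O_{\mathbb K} = \mathbb Z + \mathbb Z\omega$. Collecting coordinates in the basis $\{1,\omega\}$ therefore produces two homogeneous degree-$\ell_p$ polynomials $a_1(x,y), a_2(x,y)$ with integer coefficients such that $(\mathfrak a x + \mathfrak b y)^{\ell_p} = \pi\,(a_1(x,y) + a_2(x,y)\,\omega)$. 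Taking norms on both sides and cancelling the common factor $p^{\ell_p}$ yields the key identity
$$ Q_i(x,y)^{\ell_p} = Q_0\bigl(a_1(x,y), a_2(x,y)\bigr). $$
These $a_1, a_2$ are, equivalently, the bilinear forms obtained by composing $Q_i$ with itself $\ell_p$ times (Appendix A), the composed form being the principal $Q_0$ because $[\mathfrak P]^{\ell_p}$ is trivial.

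It then remains to close the argument in both directions. If $(x_0,y_0)$ is an integral representation with $Q_i(x_0,y_0) = p$, the identity forces $(a_1(x_0,y_0), a_2(x_0,y_0))$ to be a solution of $Q_0(\,\cdot\,,\cdot\,) = p^{\ell_p}$; conversely, for the pair $(u,v)$ attached to the generator $\pi$ of $\mathfrak P^{\ell_p}$, the integral solutions of the system $a_1(x,y)=u$, $a_2(x,y)=v$ are exactly the representations of $p$ by $Q_i$. I expect the main obstacle, and the source of the clause on choosing the sign of $u$, to lie precisely here: solving $Q_0(u,v)=p^{\ell_p}$ in isolation does not pin down the generator of $\mathfrak P^{\ell_p}$, since the ideal $\langle u + v\omega\rangle$ of norm $p^{\ell_p}$ may equal any product $\mathfrak P^{a}\bar{\mathfrak P}^{\,\ell_p-a}$; only the branch with $\langle u + v\omega\rangle = \mathfrak P^{\ell_p}$, selected up to units and conjugation by fixing the sign of $u$, makes the Diophantine system solvable over $\mathbb Z$ and returns the desired $(x_0,y_0)$. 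The bulk of the work is thus to certify that this correct branch exists and yields an integer solution; by contrast, the integrality and degree of $a_1,a_2$ follow routinely from $\mathfrak P^{\ell_p}$ being principal.
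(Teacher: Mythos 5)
Your argument follows the same skeleton as the paper's proof: represent $Q_i$ as a norm ratio via Proposition \ref{ideal1}, raise the linear element to the power $\ell_p$, divide by a generator of the resulting principal ideal, and read off the two homogeneous degree-$\ell_p$ forms $a_1,a_2$ as coordinates in the basis $\{1,\omega\}$. The integrality argument (each monomial $\mathfrak a^{k}\mathfrak b^{\ell_p-k}$ lies in $\mathfrak P^{\ell_p}=\langle\pi\rangle$), the norm identity $Q_i^{\ell_p}=Q_0(a_1,a_2)$, and your closing remark on the ambiguity of $\langle u+v\omega\rangle$ among the products $\mathfrak P^{a}\bar{\mathfrak P}^{\ell_p-a}$ (which is what the ``sign of $u$'' clause absorbs) all match the paper's chain of identities.

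The one substantive divergence is the choice of ideal, and it matters. You attach the system to the prime ideal $\mathfrak P$ above $p$ (norm $p$) and to a generator $\pi=u+v\omega$ of $\mathfrak P^{\ell_p}$, so your $a_1,a_2$ are built from data ($\mathfrak a$, $\mathfrak b$, $\pi$) that depend on $p$ and are essentially the unknowns of the problem; as a construction this is circular, and strictly speaking the form $N_{\mathbb K}(\mathfrak a x+\mathfrak b y)/N_{\mathbb K}(\mathfrak P)$ you manipulate is the form with leading coefficient $p$, equivalent to but not equal to the reduced $Q_i$. The paper instead works with the fixed ideal $\mathfrak I_{a_i}=\langle a_i,\beta_i+\omega\rangle$ of norm $a_i$ attached to the reduced form itself; its $\ell_p$-th power has a generator $\pi_{a_i}$ computable once and for all from $Q_i$ (this is exactly what Tables \ref{tab3a} and \ref{tab6} tabulate), so $a_1,a_2$ depend only on the class, and the sole $p$-dependent input is the pair $(u,v)$ with $Q_0(u,v)=p^{\ell_p}$, which the paper obtains either from equation (\ref{class1}) via point counting when $\Delta<0$ or by Gauss-reducing $qx^2+2bxy+\frac{b^2-\Delta}{q}y^2$ with $b$ a Hensel-lifted square root of $\Delta$ --- a step you leave implicit. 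Your version does establish the existence statement, but to recover the effective procedure the theorem is aiming at you should transplant the computation from $\mathfrak P$ to $\mathfrak I_{a_i}$.
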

\begin{proof}
Let $\langle 1,\omega \rangle$ be the principal ideal of $\mathcal{O}_{\mathbb{K}}$ associated to the principal form $Q_0(x,y)$ which also defines the norm in $\mathbb Q(\sqrt{\Delta})$, and let $\mathfrak{I}_{a_1},\dots,\mathfrak{I}_{a_{h_{\mathbb{K}}-1}}$, with $\mathfrak{I}_{a_i}=\langle a_i, \beta_i +\omega \rangle$, be the ideals of $\mathcal{O}_{\mathbb{K}}$ having norm $a_i$ \cite[Chapter 6]{buell}, associated to $Q_1(x,y),\dots,Q_{h_{\mathbb{K}}-1}(x,y)$ respectively.
Since there is an isomorphism between the ideal class group of $\mathcal{O}_{\mathbb{K}}$ and the proper equivalence classes of forms of discriminant $\Delta$, it follows 
that $q=p^{\ell_{p}}$ is properly represented by the principal form $Q_0(x,y)$, i.e. $q=\pi \overline{\pi}$ for some $\pi=u+\omega v \in \mathcal{O}_{\mathbb{K}}$. 
This representation can be obtained directly using equation (\ref{class1}) if $\Delta <0$. Otherwise it can be obtained by reducing the quadratic form $qx^2+2bxy+\frac{b^2-\Delta}{q}y^2$, that represents $q$ trivially, to the norm form $x^2+(\omega+\bar \omega)xy+\omega \cdot \bar \omega y^2$. The coefficient $b$ is the square root of $\Delta$ modulo $q$ and can be obtained by the Hensel lifting of the square root of
$\Delta$ modulo $p$, which can be easily obtained as described by the remark  immediately following equation  (\ref{class1}). The reduction is straightforward since the criterion is to get a quadratic form with minimum middle coefficient, i.e. either $0$ or $1$. \\
Recalling equation (\ref{qfdef}), the non-principal quadratic form $Q_i(x,y)$ can be written as a ratio of two norms
$$   Q_i(x,y)= \frac{ N_{\mathbb K} (a_ix+(\beta_i+\omega)y)}{N_{\mathbb K} (\mathfrak I_{a_i})} ~~;  $$
from this it follows that
$$  Q_0(u,v)= N_{\mathbb K} (u+\omega v)= (Q_i(x_0,y_0))^{\ell_{p}}= \frac{ N_{\mathbb K} ([a_ix_0+(\beta_i+\omega)y_0]^{\ell_{p}})}{N_{\mathbb K} (\mathfrak I_{a_i}^{\ell_{p}})} ~~ $$
for suitable $x_0,y_0 \in \mathbb{Z}$.
Observing that  
 $\pi_{a_i} \bar \pi_{a_i}=a_i^{\ell_{p}}$, we obtain the identity 
$$   u+\omega v =  \frac{ [a_ix_0+(\beta_i+\omega)y_0]^{\ell_{p}}}{\pi_{a_i}} = \frac{\bar \pi_{a_i} [a_ix_0+(\beta_i+\omega)y_0]^{\ell_{p}}}{a_i^{\ell_{p}}} ~~,   $$
where $\pi_{a_i} \in \mathcal{O}_{\mathbb K}$ is a generator of the principal ideal $\mathfrak I_{a_i}^{\ell_{p}}$  whose norm is $a_i^{\ell_{p}}$. 
An explicit expression for $\pi_{a_i}$ is obtained by computing the $\ell_{p}$-power
of $\mathfrak I_{a_i} =\langle a_i, \beta_i+\omega \rangle$. 
 In conclusion, a representation $Q_i(x_0,y_0)$ of $p$ is obtained by solving a Diophantine system
 of two equations in two unknowns, i.e. by computing the integer root of a polynomial.
\end{proof}

\section{Problem 2 with small $h_{\mathbb K}$}
    \label{exemp}
The solution of Problem 2 presents marked differences between negative and positive discriminants,
 differences that will be analyzed separately before proving a theorem covering all cases. 

\paragraph{Positive discriminants.}
  \label{realT}
When the discriminant $\Delta$ is positive, the class number $h_{\mathbb K}$ of the quadratic field $\mathbb{K}=\mathbb{Q}(\sqrt{\Delta})$ is greater than $1$, and the number of reduced quadratic forms in each proper equivalence class is greater than $2$, the problem of determining whether a quadratic form is principal can be tackled in several ways. Two possible approaches  will be described: the first is based on continued fractions, the second on Theorem \ref{pprinform} below.

\paragraph{1.}
The periodic continued fraction representing $\sqrt{\Delta}$ is commonly written as 
$         [d_0, [d_1, d_2, \ldots , d_T]]   $,
where $d_0=\lfloor \sqrt \Delta \rfloor$ is the anti-period and the entries between the inner brackets constitute the period of length $T$  ($1\leq T \leq \Delta^{1/2} \log \Delta$
 \cite[p.329-337]{hua}), with $d_T=2d_0$.
Let $\frac{p_i}{q_i}$ be the partial quotients, also called convergents, of the continued fraction. 
Numerators and denominators of the convergents are computed recursively as
$$  \left\{  \begin{array}{lcl}
          p_i = d_ip_{i-1}+p_{i-2} & ~~ & p_0=d_0,  ~~ p_{-1}=1  \\
          q_i = d_iq_{i-1}+q_{i-2} & & q_0=1,  ~~ q_{-1}=0  \\
      \end{array}   \right.   ~~ i=1,2, \ldots    ~~.
$$
The sequence $\mathcal S=\{\Delta_i=p_i^2-\Delta q_i^2\}_{i=1}^{\infty}$ satisfies the following properties, see  \cite{hua}:  
\begin{enumerate}
 \item  $\mathcal S$ is periodic with  period $T$.
 \item $|\Delta_i| < 2 \sqrt{\Delta}$ for every $i$.
 \item $\Delta_{T-1} = (-1)^T$, i.e.   $p_{T-1}+q_{T-1} \sqrt{\Delta}$ is the fundamental unit in $\mathbb Q(\sqrt{\Delta})$.
 \item All integers of absolute value less than $\sqrt{\Delta}$ which are represented by the principal forms occur in a period of the sequence $\mathcal S$.
\end{enumerate}

\noindent
The above properties offer a criterion for testing whether a quadratic form is principal \cite{mathews}.
\begin{proposition}
A quadratic form $(a,b,c)$, with positive discriminant $\Delta$, is principal if and only if one of the coefficients $a_r$ or
 $c_r$ of any reduced form $(a_r,b_r,c_r)$ among the $T$ reduced forms occurs in a period of the sequence $\mathcal S$ constructed from  the continued fraction of $\sqrt \Delta$.
\end{proposition}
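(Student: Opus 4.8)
The plan is to prove the two implications separately, leaning on the four stated properties of the sequence $\mathcal S$ together with the reduction theory of indefinite forms. First I would record two elementary facts to be used repeatedly. (i) Every reduced form $(a_r,b_r,c_r)$ of positive discriminant satisfies $|a_r|<\sqrt\Delta$ and $|c_r|<\sqrt\Delta$: from the reduction inequalities $0<b<\sqrt\Delta$ and $\sqrt\Delta-b<2|a|<\sqrt\Delta+b$ of (\ref{redcon}) one gets $2|a|<\sqrt\Delta+b<2\sqrt\Delta$, and since $|a||c|=(\Delta-b^2)/4$ the companion bound $|c_r|<\sqrt\Delta$ follows by substituting the two one-sided estimates for $|a|$. (ii) Each class of indefinite forms is exhausted by a single cycle of reduced forms, the principal class corresponding to the cycle attached to the continued fraction of $\sqrt\Delta$, whose length is $T$.

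For the forward implication I would argue directly from Property 4. If $(a,b,c)$ is principal, then every reduced form $(a_r,b_r,c_r)$ in its cycle is itself a principal form, so both $a_r$ and $c_r$ are integers represented by the principal class (by evaluation at $(1,0)$ and $(0,1)$). Since $|a_r|<\sqrt\Delta$ by fact (i), Property 4 guarantees that $a_r$ occurs in a period of $\mathcal S$, and likewise $c_r$; in particular one of the two coefficients occurs, which is exactly the asserted condition.

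For the converse I would start from the assumption that $a_r$ occurs in a period of $\mathcal S$ (the argument for $c_r$ being identical), say $a_r=\Delta_i=p_i^2-\Delta q_i^2$. My first observation is that $a_r$ is then \emph{properly} represented by the principal form, since $\Delta_i=N_{\mathbb K}(p_i+q_i\sqrt\Delta)$ is a value of the norm form and $p_iq_{i-1}-p_{i-1}q_i=\pm1$ forces $\gcd(p_i,q_i)=1$. By Lemma \ref{lemmacox} the principal form is therefore properly equivalent to $(a_r,B,C)$ for suitable $B,C\in\mathbb Z$, and the remaining task is to identify this form, after reduction, with $(a_r,b_r,c_r)$, thereby placing $(a_r,b_r,c_r)$ in the principal cycle and forcing $(a,b,c)$ to be principal.

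The step carrying the real content, and the one I expect to be the main obstacle, is precisely this last identification. Mere representation of $a_r$ by the principal form does not suffice, because $a_r$ need not be prime and a composite value can be represented by several classes; one must instead use the sharper fact that the entries in a period of $\mathcal S$ are exactly the leading coefficients of the reduced forms forming the principal cycle. Concretely, writing the complete quotients of $\sqrt\Delta$ as $\frac{P_j+\sqrt\Delta}{Q_j}$ one has $p_{j-1}^2-\Delta q_{j-1}^2=(-1)^jQ_j$, and the reduced principal form attached to that quotient has leading coefficient $(-1)^jQ_j$ with middle coefficient fixed by $P_j$; as $j$ ranges over one period these forms exhaust the principal cycle, Properties 1--3 (periodicity, the bound $|\Delta_i|<2\sqrt\Delta$, and the unit relation $\Delta_{T-1}=(-1)^T$) being what make this correspondence well defined and closed. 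The delicate point is that a leading coefficient $a_r$ together with the constraints of (\ref{redcon}) may admit two admissible middle coefficients; I would resolve this by retaining, alongside $a_r=(-1)^jQ_j$, the accompanying datum $P_j$, which pins down $b_r$ and hence the whole reduced form. Once $(a_r,b_r,c_r)$ is shown to coincide with a member of the principal cycle, the disjointness of cycles belonging to distinct classes yields that the class of $(a,b,c)$ is the principal class, completing the proof.
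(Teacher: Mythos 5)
The paper itself offers no proof of this proposition: it is asserted as a consequence of Properties 1--4 of $\mathcal S$, with a citation to Mathews, so there is no in-paper argument to measure you against and your attempt must stand on its own. Your forward implication does stand: fact (i) correctly yields $|a_r|,|c_r|<\sqrt\Delta$ from (\ref{redcon}), the evaluations at $(1,0)$ and $(0,1)$ give proper representations of $a_r$ and $c_r$ by a form of the principal class, and Property 4 then places them in a period of $\mathcal S$.

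The converse, however, has a genuine gap --- one you flag yourself but do not close. From $a_r=\Delta_i=p_i^2-\Delta q_i^2$ you correctly conclude that $a_r$ is properly represented by the principal form, hence by Lemma \ref{lemmacox} that the principal class contains \emph{some} form $(a_r,B,C)$. But the hypothesis supplies only the leading coefficient, and a single value of $a_r$ can be the leading coefficient of several mutually inequivalent reduced forms, since the window $\sqrt\Delta-b<2|a_r|<\sqrt\Delta+b$ admits more than one middle coefficient. Your proposed repair --- ``retaining the accompanying datum $P_j$, which pins down $b_r$'' --- appeals to information the hypothesis does not provide: nothing ties the middle coefficient $b_r$ of \emph{your} reduced form to the complete quotient $\frac{P_j+\sqrt\Delta}{Q_j}$ whose $Q_j$ happens to equal $\pm a_r$. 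The danger is concrete: if a composite $n$ with $|n|<\sqrt\Delta$ is properly represented both by the principal class and by a non-principal class (e.g.\ $n$ the norm of $\mathfrak p\mathfrak q$ with $[\mathfrak p]=[\mathfrak q]^{-1}$ of order greater than $2$, so that $\mathfrak p\mathfrak q$ is principal while $\mathfrak p\bar{\mathfrak q}$ is not), then $n$ occurs in a period of $\mathcal S$ while also being the outer coefficient of a reduced form lying in a non-principal cycle, and the argument as written would certify that non-principal form as principal. To complete the converse you must show that the entire reduced form $(a_r,b_r,c_r)$ --- not merely its leading coefficient --- is a member of the principal period, or else strengthen the hypothesis so that the middle coefficient is carried along; as it stands, the identification step that you yourself call ``the step carrying the real content'' is missing.
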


\paragraph{2.}
The second criterion is a consequence of the following theorem.

\begin{theorem}
  \label{pprinform}
Let $(a,b,c)$  be a quadratic form with discriminant $\Delta>0$, and $\mathbb K=\mathbb Q(\sqrt \Delta)$ be a real quadratic field whose Hilbert class field  $\mathbb L$ is defined by the root of a known polynomial
 $h_{\mathbb{K}}(x)$  of degree $h_{\mathbb K}$ over $\mathbb K$.
Suppose that all prime factors $q_i$ occurring in $\mbox{\emph{lcm}}\{a ,c\}$ are known, then $(a,b,c)$ is principal if
 $h_{\mathbb{K}}(x)$ fully splits modulo $q_i$ for every $i$.
\end{theorem}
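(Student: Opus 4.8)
The plan is to recast principality of the form as principality of an associated ideal, and then to read off principality of that ideal from the splitting of the relevant rational primes in the Hilbert class field $\mathbb L$. Throughout I write $\mathbb K=\mathbb Q(\sqrt\Delta)$ with integral basis $\{1,\omega\}$.

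First I would attach to $(a,b,c)$ its ideal. As in Proposition \ref{ideal1} and the construction used in the proof of Theorem \ref{lab2}, the form corresponds to an ideal $\mathfrak I_a=\langle a,\beta+\omega\rangle$ of norm $|a|$, and by Lemma \ref{lem1} the form $(a,b,c)$ is principal exactly when $\mathfrak I_a$ lies in the principal class. From $\Delta=b^2-4ac$ one gets $b^2\equiv\Delta\pmod{4a}$, so $\Delta$ is a quadratic residue modulo every prime power dividing $a$; hence no prime factor of $a$ is inert in $\mathbb K$, and likewise none of $c$ (the form represents $c$ at $(0,1)$, so by Lemma \ref{lemmacox} it is properly equivalent to a form of leading coefficient $c$, whose ideal has norm $|c|$ and lies in the same class). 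Factoring $\mathfrak I_a$ into prime ideals of $\mathbb K$, each prime factor $\mathfrak q$ therefore lies above one of the rational primes $q_i\mid\lcm\{a,c\}$.

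Next I would use the hypothesis to show each such $\mathfrak q$ is principal. The decisive fact is the defining property of the Hilbert class field: a prime $\mathfrak q$ of $\mathbb K$ splits completely in $\mathbb L/\mathbb K$ if and only if $\mathfrak q$ is principal — the refinement of Theorem \ref{artin} obtained by descending the complete splitting of the rational prime in $\mathbb L$ to the unramified extension $\mathbb L/\mathbb K$. Since $\mathbb L/\mathbb K$ is generated by a root of $h_{\mathbb K}(x)$, Kummer's factorization theorem identifies the factorization type of $\mathfrak q$ in $\mathbb L$ with the factorization of $h_{\mathbb K}(x)$ modulo $\mathfrak q$; as the residue field at $\mathfrak q$ (for a split or ramified $q_i$, which is the only case by the non-inertness just established) is $\mathbb F_{q_i}$, the hypothesis that $h_{\mathbb K}(x)$ fully splits modulo $q_i$ forces $h_{\mathbb K}(x)$ to split into linear factors modulo $\mathfrak q$, i.e. $\mathfrak q$ splits completely in $\mathbb L/\mathbb K$, so $\mathfrak q$ is principal.

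Finally I would assemble the conclusion: $\mathfrak I_a$ is a product of the prime ideals $\mathfrak q$ above the $q_i$, each now principal, together with (trivially principal) rational primes, so $\mathfrak I_a$ is principal and $(a,b,c)$ is a principal form. The main obstacle is the middle step: one must make precise the reduction of $h_{\mathbb K}(x)$ ``modulo $q_i$'' and its passage to the $\mathbb K$-prime $\mathfrak q$, invoke Kummer's theorem only for primes $q_i$ not dividing the index $[\mathcal O_{\mathbb L}:\mathcal O_{\mathbb K}[\alpha]]$ (which must be assumed away or treated separately), and use the ``splits completely $\iff$ principal'' characterization, which is the converse of the implication literally stated in Theorem \ref{artin}. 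This delicacy — together with the fact that a ramified or otherwise badly-behaved $q_i$ makes the hypothesis fail rather than the conclusion — is exactly why only a sufficient (``if'') condition is asserted.
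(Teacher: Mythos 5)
Your proposal is correct, and its central step is the same as the paper's: from the full splitting of $h_{\mathbb K}(x)$ modulo $q_i$ one deduces that $q_i$ splits completely in $\mathbb L$, hence that the primes of $\mathbb K$ above $q_i$ are principal. Where you diverge is in the final assembly. The paper stays in the language of forms: it concludes that each $q_i$ is representable by a principal form, invokes composition to get that $a$ and $c$ are representable by the principal form, and then asserts that this makes $(a,b,c)$ principal. That last inference is actually the weakest link in the paper's argument, since an integer can in general be represented by forms of more than one class, so representability of $a$ by the principal form does not by itself pin down the class of $(a,b,c)$; what is really needed is that \emph{every} ideal of $\mathcal O_{\mathbb K}$ of norm $|a|$ is principal. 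Your ideal-theoretic route delivers exactly that: you factor the ideal $\mathfrak I_a$ attached to $(a,b,c)$ via Proposition \ref{ideal1} and Lemma \ref{lem1} into primes lying over the $q_i$, show each such prime is principal using the ``splits completely in $\mathbb L/\mathbb K$ iff principal'' characterization (the converse direction of Theorem \ref{artin}), and conclude $\mathfrak I_a$ is principal. You also flag two technicalities the paper passes over in silence — the Kummer--Dedekind index condition needed to read the splitting of $\mathfrak q$ in $\mathbb L$ off the factorization of $h_{\mathbb K}(x)$ modulo $q_i$, and the fact that only the ``if'' direction is claimed — which is the right level of care here. In short: same key lemma, but your ideal-factorization ending is tighter than the paper's composition-of-forms ending and closes a genuine gap in it.
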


\begin{proof}
If $h_{\mathbb{K}}(x)$ fully splits modulo $q_i$, then $q_i$ is representable by a principal form, because $q_i$ splits into
 $2h_{\mathbb K}$ prime factors in $\mathbb L$ (see \cite[p.137-138]{dedekind}), and thus into two conjugate prime factors in $\mathbb K$. Hence $q_i$ is representable by a principal quadratic form. The composition of forms implies that $a$ and $c$ are representable by a principal form, which in turn implies that $(a,b,c)$ is principal.
\end{proof}

\paragraph{Negative discriminants.}
Recalling that the number of imaginary quadratic fields with given class number $\mathbf h$ is finite (a circumstantial proof of this, which Gauss hypothesized, is given in \cite{watkins,goldfeld}), 
the complete list of fields for class number $1$ is given in Table \ref{tabclass1} below, while the compleste lists \cite{sloane} for  class numbers $2$, and $3$ are given as Tables \ref{roma2}, \ref{tab3a}, \ref{tab4a}, \ref{tab5}, and \ref{tab6}.

\begin{table}[h]
\begin{center}
\begin{tabular}{|c|c|c|c|c|c|}    \hline
$D$ & $\Delta$ & $\omega$                        & $Q_0$                  &  $\jmath_0$          & $\mathbb{E}(\mathbb{L} \mid j_0)$   \\  \hline
 -1   & $-4$       & $\sqrt{-1}$                       & $x^2+ y^2$          &  $12^3$            & $x^3-x$   \\ 
 -2   & $-8$       & $\sqrt{-2}$                       & $x^2+2y^2$         &  $20^3$            & $x^3-\frac{375}{98}x-\frac{125}{49}$   \\  \hline
 -3   & $-3$       & $\frac{1+\sqrt{-3}}{2}$    & $x^2+xy+y^2$     &  $0$                  & $x^3-1$      \\
 -7   & $-7$       & $\frac{1+\sqrt{-7}}{2}$    & $x^2+xy+2y^2$   &  $(-15)^3$         & $x^3-\frac{125}{63}x-\frac{250}{189}$  \\
 -11 & $-11$     & $\frac{1+\sqrt{-11}}{2}$  & $x^2+xy+3y^2$   &  $(-32)^3$         & $x^3-\frac{1536}{539}x-\frac{1024}{539}$   \\ 
-19  & $-19$     & $\frac{1+\sqrt{-19}}{2}$  & $x^2+xy+5y^2$   &  $(-96)^3$         & $x^3-\frac{512}{171}x-\frac{1024}{513}$   \\  
-43  & $-43$     & $\frac{1+\sqrt{-43}}{2}$  & $x^2+xy+11y^2$ &  $(-960)^3$       & $x^3-\frac{512000}{170667}x-\frac{1024000}{512001}$   \\  
-67  & $-67$     & $\frac{1+\sqrt{-67}}{2}$  & $x^2+xy+17y^2$ &  $(-5280)^3$     & $x^3-\frac{85184000}{28394667}x-\frac{170368000}{85184001}$   \\  
-163& $-163$   &$\frac{1+\sqrt{-163}}{2}$ & $x^2+xy+41y^2$ &  $(-640320)^3$ & $ x^3-\frac{151931373056000}{50643791018667}x-\frac{303862746112000}{151931373056001}$ \\  \hline
  
\end{tabular}

\vspace{5mm}

Legend 

\vspace{2mm}
\begin{tabular}{lcl} \hline
$\Delta$ &~~& Field discriminant \\
$\omega$ &~~& Integral basis element \\
$Q_0$ &~~& Reduced form \\
$\jmath_0$  &~~& $\jmath$-invariant,~  root of the Hilbert class polynomial \\
$\mathbb{E}(\mathbb{L} \mid j_0)$ &~~& Elliptic curve of given $\jmath$-invariant  \\ \hline
\end{tabular}
\end{center}

\caption{Imaginary quadratic fields $\mathbb K= \mathbb Q(\sqrt{\Delta})$ of class number 1}
\label{tabclass1}
\end{table}

\vspace{3mm}
\noindent
Let $p$ be an odd prime represented by some quadratic form of discriminant $\Delta$. Given the quadratic field $\mathbb{K}=\mathbb{Q}(\sqrt{\Delta})$ and its Hilbert class polynomial $h_{\mathbb K}(x) \in \mathbb{Z}[x]$, when $h_{\mathbb K} \leq 6$, excluding $5$, the joint use of the Hilbert class polynomial $h_{\mathbb K}(x)$ and genus theory allows us to identify the equivalence class representing $p$ without computing the representation of $p$, as shown in Theorem \ref{splt} below.\\
To check the full factorization of $h_{\mathbb{K}}(x) \bmod{p}$ in polynomial complexity, operatively, we compute
$$a(x) = \gcd \{h_{\mathbb K}(x) \bmod p, x^{p-1}-1) \}       \,\,, $$ 
thus
 \begin{itemize}
   \item[-] If $a(x) = h_{\mathbb K}(x)$ then $p$ is represented by the quadratic
     forms of the principal class.
   \item[-] If $a(x) \neq h_{\mathbb K}(x)$ then $p$ is represented by the quadratic
     forms of some non-principal class.
\end{itemize}   

\begin{theorem}
   \label{splt} Given a discriminant $\Delta$, assume that the following are known: the class number $h_{\mathbb{K}}$, and the Hilbert class polynomial $h_{\mathbb{K}}(x)$ of the quadratic field $\mathbb K =\mathbb{Q}(\sqrt{\Delta})$, and at least one representative of each proper equivalence class of quadratic forms with discriminant $\Delta$. If $h_{\mathbb{K}} \in \{2,3,4,6\}$, then all primes representable by some quadratic form of discriminant $\Delta$ may be separated into sets, one set for each equivalence class, 
using  Jacobi characters and the Hilbert class polynomial, as follows:
\begin{description}
   \item[$h_{\mathbb K}=2$:]  there are two genera; the Jacobi characters suffice to separate all representable primes. 
   \item[$h_{\mathbb K}=3$:] there is a single genus; splitting the Hilbert class polynomial $h_{\mathbb K}(x)$ is necessary and sufficient to separate all primes represented by the principal forms and non-principal forms into two sets.
   \item[$h_{\mathbb K}=4$:] the form class group may have two group structures:
    \begin{description}
      \item[-] it may be the Vierergruppe, i.e. isomorphic to  $\mathbb Z_2 \times \mathbb Z_2$: there are four genera, and the Jacobi characters suffice to separate all representable primes.
      \item[-] it may be cyclic, i.e. isomorphic to $\mathbb Z_4$: there are two genera, and the Hilbert class polynomial $h_{\mathbb K}(x)$ is indispensable to separate all representable primes.
   \end{description}
 \item[$h_{\mathbb K}=6$:]  there are six proper equivalence classes and two genera;  representable primes can be separated into four sets by the joint use of the Hilbert class polynomial $h_{\mathbb K}(x)$
      and the Jacobi characters. 
\end{description}
\end{theorem}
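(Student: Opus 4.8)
The plan is to translate the separation problem into a statement about the form class group $G=\mathrm{Cl}(\mathbb K)$, abelian of order $h_{\mathbb K}$, and then to verify that statement for each admissible order. Two observations frame everything. First, a form and its opposite generate mutually inverse classes and represent exactly the same integers; since, as recalled in Section~2, improperly equivalent forms represent the same primes, no representability criterion can ever separate a class $C$ from $C^{-1}$, so the finest achievable partition of $G$ is into the inverse-pairs $\{C,C^{-1}\}$. Second, to each prime $p$ represented by a form of discriminant $\Delta$ — equivalently each $p$ with $\langle p\rangle=\mathfrak P\bar{\mathfrak P}$ split in $\mathbb K$ — I attach two invariants: the \emph{genus} of $p$, given by the Jacobi characters $(\chi_1(p),\dots,\chi_r(p))$, and the order $\ell_p$ of the ideal class $[\mathfrak P]$ defined above. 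The goal is to show that for $h_{\mathbb K}\in\{2,3,4,6\}$ the pair $(\mathrm{genus},\ell_p)$ resolves $G$ up to inversion.

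I then have to pin down what the two invariants measure and that they are computable from the given data. By Gauss's Theorem~\ref{gaussth} the number of genera equals $2^{r-1}=[G:G^2]$, and the character tuple depends only on the coset $[\mathfrak P]\,G^2$; hence the Jacobi characters compute exactly the image of the class in $G/G^2$, and in particular assign inverse classes to the same genus. For $\ell_p$ I would use the Hilbert class polynomial: since $\mathfrak G(\mathbb L/\mathbb K)\cong G$ and the Artin symbol carries $\mathfrak P$ to $[\mathfrak P]$, the Frobenius at $\mathfrak P$ has order $\ell_p$, so $h_{\mathbb K}(x)$ reduced modulo $p$ factors into $h_{\mathbb K}/\ell_p$ irreducible polynomials each of degree $\ell_p$; the degree is extracted in polynomial time as the least $d\mid h_{\mathbb K}$ with $\gcd\{h_{\mathbb K}(x)\bmod p,\,x^{p^{d}}-x\}$ nontrivial, the case $d=1$ being the full-splitting test of Theorem~\ref{artin}. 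Establishing this last correspondence — that the common factor degree equals the order of the class — is the analytic core and the step I expect to be the main obstacle; it is the splitting law for the abelian extension $\mathbb L/\mathbb K$, which in the imaginary case is Deuring's explicit factorization of the class polynomial modulo $p$.

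It then remains to check, for each abelian $G$ of order $h_{\mathbb K}$, that $C\mapsto(CG^2,\mathrm{ord}(C))$ has fibres equal to the inverse-pairs. I would tabulate this directly. For $G\cong\mathbb Z_2$ the two classes occupy the two genera, so the characters alone separate them; for $G\cong\mathbb Z_2\times\mathbb Z_2$ all four classes lie in four distinct genera, so again the characters suffice and the order invariant is inert. For $G\cong\mathbb Z_3$ there is one genus and the two nontrivial classes are mutually inverse, so the full-splitting test ($\ell_p=1$ versus $\ell_p=3$) cleaves the primes into the principal set and its complement. For $G\cong\mathbb Z_4$ the genera are $G^2=\{e,g^2\}$ and $\{g,g^3\}$: the characters split off the genus, inside the principal genus the values $\ell_p=1,2$ separate $e$ from $g^2$, and $g,g^3$ are inverse. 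For $G\cong\mathbb Z_6$ the pair $(CG^2,\ell_p)$ takes the four distinct values $(eG^2,1),(eG^2,3),(gG^2,2),(gG^2,6)$ on $e$, $\{g^2,g^4\}$, $g^3$, $\{g,g^5\}$; here the characters isolate the two genera while the factor degree distinguishes the order-$2$ class $g^3$ from the order-$6$ pair inside the non-principal genus, giving the asserted four sets.

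Finally I would record why $h_{\mathbb K}=5$ must be excluded and why no admissible value is missed. In $G\cong\mathbb Z_5$ the four nontrivial classes share the single genus and the single order $\ell_p=5$, yet split into two distinct inverse-pairs $\{g,g^4\}$ and $\{g^2,g^3\}$ that represent different primes; the pair $(\mathrm{genus},\ell_p)$ is therefore constant on a set it ought to separate. The same bookkeeping confirms that, beyond the trivial $h_{\mathbb K}=1$, the values $\{2,3,4,6\}$ are exactly the orders at most $6$ for which every inverse-pair of $G$ receives a distinct label, which is precisely the content of the theorem.
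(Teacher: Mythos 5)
Your proposal is correct and follows essentially the same route as the paper's: the Jacobi characters pin down the genus (the coset modulo squares of the class group) and the splitting type of the Hilbert class polynomial modulo $p$ pins down the order $\ell_p$ of the class above $p$, after which the cases $h_{\mathbb K}\in\{2,3,4,6\}$ are checked group by group exactly as in the paper. You are somewhat more explicit than the paper on two points it leaves implicit — that $h_{\mathbb K}(x)\bmod p$ factors into irreducibles of common degree equal to the order of $[\mathfrak P]$, and why $h_{\mathbb K}=5$ must be excluded — but the decomposition and the key ingredients are the same.
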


\begin{proof}
The different cases, corresponding to different values of the class number, will be addressed in order:
 
\paragraph{$h_{\mathbb K}=2$.} 
Since the class number is even, the discriminant $\Delta$ is certainly composed; thus we necessarily have two genera, which are identified by different values of Jacobi characters. In this case, also splitting 
\cite[p.390]{pohst} the Hilbert class polynomial $h_{\mathbb K}(x)$ may be used to separate all primes represented by the principal and non-principal forms into two sets.
\paragraph{$h_{\mathbb K} = 3$.} Since the class number is prime, there is a single Jacobi character, which is trivial, and there is only a single genus. The separation of representable primes is obtained by  splitting the Hilbert class polynomial $h_{\mathbb K}(x)$; they are partitioned only into two sets, precisely
\begin{itemize}
   \item[-] the set of primes $p$ represented by the quadratic forms of the principal class, which are identified   by the full splitting of  $h_{\mathbb K}(x)$ modulo $p$;
  \item[-] the set of primes $p$ represented by the quadratic forms of the remaining two proper equivalence classes that are composed by forms improperly equivalent, and are identified by the fact that  $h_{\mathbb K}(x)$ is irreducible modulo $p$.
\end{itemize}

\paragraph{$h_{\mathbb K} =4$.} Since there are two non-isomorphic groups of order $4$, the cyclic group and the Vierergruppe, there could be two different kinds of class groups with different genera, correspondingly, representable primes are differently partitioned: 

\begin{itemize}
\item[-]  class group isomorphic to the Vierergruppe $\mathbb Z_2 \times \mathbb Z_2$:  each class is self-reciprocal, and there are four genera.
Thus representable primes are separated into four sets, one set for each proper equivalence class of quadratic forms, by the Jacobi characters.
\item[-]  class group cyclic of order $4$: due to the group structure there are only two 
self-reciprocal proper equivalence classes of quadratic forms and two genera, thus two Jacobi characters.
Representable primes are partitioned into three sets, using the Jacobi characters and the factorization of $h_{\mathbb{K}}(x)$. Two sets concern the principal genus composed  of the principal class and the
 self-reciprocal proper equivalence class. The third set corresponds to the second genus which  comprises  two proper  mutually reciprocal equivalence classes.\\
Alternatively, the separation can be achieved using only the splitting of the Hilbert class polynomial $h_{\mathbb K}(x)$:
\begin{itemize}
    \item[  i)] If  $h_{\mathbb K}(x)$  splits modulo $p$ into $4$ linear factors, then $p$ is represented 
      by the quadratic forms of the principal class;  
    \item[ ii)] If  $h_{\mathbb K}(x)$  splits modulo $p$ into $2$ quadratic factors, then $p$ is  represented  by the  quadratic forms of the second self-reciprocal proper equivalence class;
    \item[iii)] If  $h_{\mathbb K}(x)$ does not split modulo $p$, then $p$ is
     represented by some quadratic form of the second genus.
\end{itemize}
\end{itemize}

\paragraph{$h_{\mathbb K} =6$.} The quadratic form class group is cyclic of order $6$; there are six proper equivalence classes and two genera containing three classes each. A genus $\mathcal G_1$ contains the principal class and two proper equivalence classes that are mutually reciprocal; the second genus $\mathcal G_2$ contains a self-reciprocal class of quadratic forms and two classes that are mutually reciprocal. \\
 Representable primes are partitioned into two sets, one per genus, which are distinguished by their Jacobi characters. The set of primes pertaining to each genus may be further partitioned into two sets by splitting the Hilbert class polynomial $h_{\mathbb K}(x)$, precisely
\begin{enumerate}
   \item A prime $p$ represented by the quadratic forms belonging to genus $\mathcal G_1$ is
    represented by the quadratic forms of the principal class if $h_{\mathbb K}(x)$ fully splits modulo $p$, otherwise it is represented by the quadratic forms of the remaining two proper equivalence classes.
   \item A prime $p$ represented by the quadratic forms belonging to genus $\mathcal G_2$ is
    represented by the quadratic forms of the self-reciprocal proper equivalence class if $h_{\mathbb K}(x)$ splits modulo $p$ into three factors of degree $2$, otherwise it is represented by the quadratic forms of the remaining two classes.
\end{enumerate}

\end{proof}

\subsection{An example}
A significant example illustrating most of the issues concerning the partition of primes into representability classes is offered by the smallest positive discriminant $82$,\cite[Table III, page 271]{cohn}  of class number $h_{\mathbb K}=4$. Let $\alpha$ denote a root of $x^2-82$; the fundamental unit in $\mathbb K=\mathbb Q(\alpha)$ is $9+\alpha$ which has norm $-1$, and the class group is cyclic of order $4$. The four classes
of properly inequivalent quadratic forms are only partitioned into two genera, thus Jacobi characters cannot separate totally the whole set of representable primes, a target that is achieved using factorization of the Hilbert polynomial. \\
The non principal ideals are $\pi_2=(2,\sqrt{82})$,  $\pi_3=(3,1+\sqrt{82})$ and its conjugate $\bar \pi_3=(3,1-\sqrt{82})$, to which are associated the reduced quadratic forms
$2x^2-41y^2$, $3x^2- 2xy-27y^2$, and  $3x^2+2xy-27y^2$,  respectively. 
The Hilbert class polynomial is defined over $\mathbb K=\mathbb Q(\alpha)$ 
$$  h_{\mathbb K}(x)=x^4-(120\alpha +1676)x^2+71760\alpha+672644  \,\,. $$
Consider $H_{\mathbb K}(x) = h_{\mathbb K}(x) \bar  h_{\mathbb K}(x)$ defined over $\mathbb Q$,
the set of primes $p$ such that
$\jacobi{10}{p}=1$ is partitioned into three subsets of primes represented by the forms $x^2-82y^2$ (principal),   $2x^2-41y^2$,  and $3x^2\pm 2xy-27y^2$, respectively. The smallest primes represented by the three improperly inequivalent forms are:
$$ \begin{array}{l} 
    \mathcal P_1 = \{  73, 103, 113, 223, 359, 401, 449, \ldots   \}  ~~~\mbox{if} ~~H_{\mathbb K}(x) ~~\mbox{fully splits modulo $p$} \\
   \mathcal P_2 = \{ 23, 31, 127, 241, 271, 337, 353,  \ldots   \}  ~~~\mbox{if} ~~H_{\mathbb K}(x) ~~\mbox{splits into quadratic factors modulo $p$}  \\     
   \mathcal P_{3,4} = \{ 3, 11, 19, 29, 53, 67, 101, 109, 149, 157,  \ldots   \}   ~~~\mbox{if} ~~H_{\mathbb K}(x) ~~\mbox{splits into quartic factors modulo $p$}   
  \end{array}
\,\,.   $$
Note that each genus contains two quadratic forms, that is $\{ x^2-142y^2,~2x^2-41y^2 \}$ is the first genus identified by the signature $ (++)$,  and $\{ 3x^2\pm 2xy-27y^2 \}$ is the second genus identified
by the signature $(-,-)$. The Hilbert class polynomial must be used to separate the primes between $\mathcal P_1$ and $\mathcal P_2$. \\
This example is completed,  using the alternative method described in Section \ref{sect5} by computing a representation of the prime $p=8081$ belonging to
$ \mathcal P_2$ which implies that $2~x^2 -41~y^2 =8081$ is solvable in $\mathbb Z$. 
The prime  $8081$ splits into two non-principal ideals of $\mathbb K$, namely, $\mathbf \pi=\langle 8081, 2737+\sqrt{82} \rangle$ and its conjugate. The ideal square $\mathbf \pi^2= 8081 \langle 1 , \sqrt{82} \rangle$
is principal, which means that $8081^2$ can be represented by the principal  quadratic form $x^2-82y^2$, norm of $\mathbb K$. A solution of   $x^2-82y^2=8081^2$ is found by starting with the principal quadratic form
$Q_a(x,y)=    8081^2 x^2 -2bxy + \frac{b^2-82}{8081^2} y^2 $
and reducing it to the form  $x^2-82y^2$. The coefficient $b$ is computed as a root of $u^2-82 \bmod 8081^2$, which is obtained by lifting a root of $u^2-82 \bmod 8081$, i.e. as $2737+t 8081$, where $t$
is computed by solving a linear equation
$$ 82 = (2737+t 8081)^2  \bmod 8081^2 \Rightarrow 0=7491087+2t 2737 8081  \bmod 8081^2   \Rightarrow  0=927+2 \cdot 2737 t \bmod 8081$$
that is, $b=2737+t~8081=5190739 $. Upon reducing $Q_a(x,y)$ to the form $x^2-82y^2$, we get
the representation of $8081^2$ as $(8819,390)$, and consequently the set of equations
$$ \left\{ \begin{array}{rcl} 
         2 x^2+41 y^2 &=& 8819    \\
         2~x~ y&=&  390
       \end{array}    \right.
$$
from which an equation satisfied by $x$ is obtained
$$  2~x^4-8819~x^2+1559025 = (x+65)~(x-65)(2~x^2-369) ~~. $$
In conclusion, the representation  $x_o=65,y_o=3$  of $8081$ is produced. \\
The computation of a representation of a prime in class $ \mathcal P_{3,4} $ may illustrate the method more clearly.  Consider the prime $p=239347$ that belongs to class $ \mathcal P_{3,4} $ since its signature is $(- ~ -)$, that is the  Jacobi characters are $\jacobi{2}{p} =\jacobi{41}{p}=-1$. The exponent $\ell_p$ is $4$, then $p^{\ell_p}$ is represented by the principal form $x^2-82y^2$: a representation is found by starting with the form
$Q_a(x,y)=   p^4 x^2 -2bxy + \frac{b^2-82}{p^4} y^2 $
and reducing it to the form  $x^2-82y^2$. The coefficient $b=631164344666839182838$ is a root of $u^2-82 \bmod p^4$,and is computed by lifting (Hensel lifting) the root $u=59431 $  of $u^2-82 \bmod p$.
Upon reduction of $Q_a(x,y)$ to the form $x^2-82y^2$, we get
the representation $\mathfrak r=(89593576091, 7607129250)$ of $p^4$, and consequently the set of equations
$$ \left\{ \begin{array}{rcl} 
        x^4+108~x^3~y-54~x^2~y^2+996~x~y^3-247~y^4 &=& -1430126783319    \\
        x^4+54~x^2~y^2-24~x~y^3+85~y^4&=&  158057739341
       \end{array}    \right.
$$
where the correct known terms are obtained considering that the representation $\mathfrak r$
is known apart from a multiplication by a unit.
From the last system an equation satisfied by $x$ is obtained
$$ \begin{array}{l}
     4864106742784~x^{16}-270701981991284512575616~x^{12}+
         5549832096231657223051561479738660~x^8- \\     
       53544176878327185627902860618323552699383569~x^4+ \\
      181137242756632602186135956055478753968726288295184656=0
\end{array}
$$
which has two rational roots $x=\pm 286$, to which correspond two $y$s, i.e. $y=\pm 145$.
%

\section{Conclusions}
The computational problem of representing a prime $p$ by some reduced quadratic form
 $Q_G(x,y)$ of discriminant $\Delta$
 is solved by exploiting the Gauss reduction algorithm, and this solution has deterministic  polynomial complexity $O((\ln p)^\alpha)$,   
with $\alpha \leq 9$, when  Schoof's algorithm is used to compute the square root of $\Delta$ modulo $p$. 
If the problem is to represent $p$ by a specific quadratic form $Q(x,y)$ of discriminant $\Delta$,
once the representation of $p$ by $Q_G(x,y)$ has been obtained, it is necessary to find (provided that it exists) the linear transformation between $Q(x,y)$ and $Q_G(x,y)$. 
At this point the cases of negative and positive $\Delta$s are slightly different. 
When $\Delta$ is negative, the reduced form of $Q(x,y)$ coincides with $Q_G(x,y)$ and the related linear transformation solves the problem. When $\Delta$ is positive, the set of reduced forms may contain more than a single quadratic form, thus a  further step is required to find the linear transformation sending the  reduced form of $Q(x,y)$ to $Q_G(x,y)$. \\
An alternative way fto compute the representations of prime numbers has also been shown, which is equivalent to finding an integer root of a polynomial. This solution is particularly attractive when the class number is small and in any case avoids searching within a set of reduced forms. 

\noindent
As regard the problem of deciding which equivalent class of quadratic forms represents a given prime, a computational algorithm, based on Jacobi characters and the splitting of Hilbert class polynomials, is described.

\noindent
 Lastly, the most tricky, but most useful, computational problem remains open: that of deciding which quadratic forms of discriminant $\Delta$ represent a given composite $m$ without knowing its factorization, and of finding the corresponding representations, i.e. solving equation (\ref{main1}) when solvable.
 The observation that solving this equation may imply easy factorization of composite $m$ \cite{huber} places the problem in a perspective relevant to several of today's applications. 
The factoring viability is generally valid only limited to positive definite quadratic forms that are principal, and in these cases the proof, based on the
 observation that a composite $m$ has a multiplicity of representations, is almost straightforward.

\paragraph{Acknowledgement.}The authors wish to thank Professor Massimo Giulietti (University of Perugia), and Professor Carmelo Interlando (San Diego State University) for many useful comments and suggestions.

{ \begin{singlespace}

\end{singlespace}
}

\pagebreak

\begin{center}
{\bf  Appendix A: Gauss's composition}
\end{center}

\noindent
Given two quadratic forms   $(a_1,b_1, c_1)$ and $(a_2,b_2, c_2)$ with the same discriminant
 $\Delta$, the composed form  is
$$  a_3 = \frac{a_1a_2}{d^2} \hspace{5mm}, \hspace{5mm} b_3=B\hspace{5mm}, \hspace{5mm}  
  c_3= \frac{b_3^2-\Delta}{4a_3}     \,\,,    \mbox{where}  $$
\begin{enumerate}
   \item $\beta=\frac{b_1+b_2}{2}$, $n=\gcd\{a_1,a_2,\beta \}$, and $t,u,v$ are chosen to satisfy
$   d=ta_1+ua_2+v\beta   $.
   \item $B=\frac{a_1b_2t+ua_2b_1+v(b_1b_2+\Delta)/2}{n}$  
\end{enumerate}
$x_3$ and $y_3$ are given by the equation
$$  \left( \begin{array}{c}
                       x_3  \\
                       y_3  \\
                \end{array}  \right) =
 \left( \begin{array}{cccc}
                       n  & \displaystyle    \frac{(b_2-B)n}{2a_2} &  \displaystyle   \frac{(b_2-B)n}{2a_1}   &  \displaystyle   \frac{[b_1b_2+\Delta-B(b_1+b_2)]n}{4a_1a_2}   \\
     &  &  &  \\
                       0  & \displaystyle    \frac{a_1}{n} & \displaystyle    \frac{a_2}{n}   & \displaystyle    \frac{b_1+b_2}{2n}  \\
                \end{array}  \right)
 \left( \begin{array}{c}
                       x_1x_2  \\
                       x_1y_2  \\
                       y_1x_2  \\
                       y_1 y_2
                \end{array}  \right)
$$

\noindent
By the composition rule, the set of quadratic form classes is a group, that is, composing any quadratic form of one class with any quadratic form of another class, a quadratic form of a third, and the same, class is always obtained. This group is isomorphic to the ideal class group of the quadratic field  $\mathbb Q(\sqrt{\Delta})$.

\vspace{6mm}

\begin{center}
{\bf   Legend, Tables \ref{roma2} - \ref{tab6} }
\end{center}

\vspace{2mm}

\begin{tabular}{lcl} \hline
$\Delta$ &~~& Field discriminant \\
$\omega$ &~~& Integral basis element \\
$Q_i$ &~~& Reduced forms \\
$h_{\mathbb{K}}(x)$ &~~&  Hilbert class polynomial \\
$\pi_a$ &~~& $\langle \pi_a \rangle$ is equal to the non-principal ideal rised to some factor of the the class number \\
  System  &~~&  Diophantine system from Theorem \ref{lab2}  \\ \hline
\end{tabular}

\begin{table}
{\scriptsize
\begin{center}
\begin{tabular}{|c|c|c|c|c|c|}    \hline
$D$  &$\Delta$& $\omega$                       & $Q_i$                       & Ideals                                         & $h_{\mathbb{K}}(x)$                        \\  \hline
-5     & $-20$   & $\sqrt{-5}$                      & $x^2+5y^2$             & $\langle 1, \omega \rangle$        & $x^2-1264000x-681472000$                                                    \\  
        &              &                                        &$2x^2+2xy+3y^2$   & $\langle 2,1+\omega \rangle$     &                                                             \\ \hline
-6     & $-24$   & $\sqrt{-6}$                      & $x^2+6y^2$             & $\langle 1, \omega \rangle$        & $x^2-4834944x+$                                                   \\  
        &              &                                        &$2x^2+3y^2$            & $\langle 2,\omega \rangle$        &  $+14670139392$                                                    \\ \hline
-10   & $-40$   & $\sqrt{-10}$                    & $x^2+10y^2$            & $\langle 1, \omega \rangle$       &$x^2 - 425692800x +$                                              \\
        &              &                                        &$2x^2+5y^2$             & $\langle 2,\omega \rangle$       & $+9103145472000$                                                  \\ \hline             
-13   & $-52$   & $\sqrt{-13}$                    & $x^2+13y^2$            & $\langle 1, \omega \rangle$       & $x^2 - 6896880000x -$                                             \\
        &              &                                        &$2x^2+2xy+7y^2$    &  $\langle 2,1+\omega \rangle$   & $- 567663552000000$                                              \\ \hline
-15   & $-15$   & $\frac{1+\sqrt{-15}}{2}$ & $x^2+xy+4y^2$        & $\langle 1, \omega \rangle$       & $x^2 + 191025x -$                                                     \\ 
        &              &                                        &$2x^2+xy+2y^2$      & $\langle 2,\omega \rangle$        & $- 121287375$                                                           \\ \hline
-22   & $-88$   & $\sqrt{-22}$                    & $x^2+22y^2$            & $\langle 1, \omega \rangle$       & $x^2 - 6294842640000x +$                                       \\ 
        &              &                                        &$2x^2+11y^2$          &  $\langle 2,\omega \rangle$        & $+ 15798135578688000000$                                    \\ \hline  
-35   & $-35$   & $\frac{1+\sqrt{-35}}{2}$ & $x^2+xy+9y^2$        & $\langle 1, \omega \rangle$        &  $x^2 + 117964800x -$                                             \\
        &              &                                        &$3x^2+xy+3y^2$      & $\langle 3,\omega \rangle$        &   $- 134217728000$                                                    \\ \hline
-37   & $-148$ & $\sqrt{-37}$                    & $x^2+37y^2 $           & $\langle 1, \omega \rangle$       &  $x^2 - 39660183801072000x -$                                \\ 
        &              &                                       &$2x^2+2xy+19y^2$   & $\langle 2,1+\omega \rangle$    &  $ - 7898242515936467904000000$                           \\ \hline  
-51   & $-51$   &$\frac{1+\sqrt{-51}}{2}$  & $x^2+xy+13y^2$      & $\langle 1, \omega \rangle$        & $x^2 + 5541101568x + $                                            \\ 
        &              &                                        &$3x^2+3xy+5y^2$    & $\langle 3,1+\omega \rangle$    &  $+ 6262062317568$                                                   \\ \hline
-58   & $-232$ & $\sqrt{-58}$                    & $x^2+58y^2$            & $\langle 1, \omega \rangle$       &  $x^2 - 604729957849891344000x +$                        \\  
        &              &                                        &$2x^2+29y^2$          & $\langle 2,\omega \rangle$         &  $+ 14871070713157137145512000000000$            \\ \hline
-91   & $-91$   & $\frac{1+\sqrt{-91}}{2}$  & $x^2+xy+23y^2$     & $\langle 1, \omega \rangle$        &  $x^2 + 10359073013760x -$                                     \\  
        &              &                                        &$5x^2+3xy+5y^2$    & $\langle 5,1+\omega \rangle$     &  $- 3845689020776448$                                             \\ \hline  
-115 & $-115$ & $\frac{1+\sqrt{-115}}{2}$& $x^2+xy+29y^2$     & $\langle 1, \omega \rangle$        &  $x^2 + 427864611225600x +$                                  \\  
        &              &                                        &$5x^2+5xy+7y^2$    & $\langle 5,2+\omega \rangle$     &  $+ 130231327260672000$                                        \\ \hline
-123 & $-123$ & $\frac{1+\sqrt{-123}}{2}$&$x^2+xy+31y^2$      & $\langle 1, \omega \rangle$       &   $x^2 + 1354146840576000x +$                                 \\  
        &              &                                        &$3x^2+3xy+11y^2$  & $\langle 3,1+\omega \rangle$    &   $+ 148809594175488000000$                                   \\ \hline
-187 & $-187$ & $\frac{1+\sqrt{-187}}{2}$&$x^2+xy+47y^2$      & $\langle 1, \omega \rangle$       &    $x^2 + 4545336381788160000x -$                            \\  
        &              &                                        &$7x^2+3xy+7y^2$    & $\langle 7,1+\omega \rangle$    &    $- 3845689020776448000000$                                  \\ \hline
-235 & $-235$ & $\frac{1+\sqrt{-235}}{2}$&$x^2+xy+59y^2$      & $\langle 1, \omega \rangle$       &    $x^2 + 823177419449425920000x +$                       \\  
        &              &                                        &$5x^2+5xy+13y^2$  & $\langle 5,2+\omega \rangle$    &   $+ 11946621170462723407872000$                          \\ \hline
-267 & $-267$ & $\frac{1+\sqrt{-267}}{2}$&$x^2+xy+67y^2$      & $\langle 1, \omega \rangle$       &  $x^2 + 19683091854079488000000x +$                     \\  
        &              &                                        &$3x^2+3xy+23y^2$  & $\langle 3,1+\omega \rangle$    &  $+ 531429662672621376897024000000$                   \\ \hline
-403 & $-403$ & $\frac{1+\sqrt{-403}}{2}$&$x^2+xy+101y^2$    & $\langle 1, \omega \rangle$       &  $x^2 + 2452811389229331391979520000x -$             \\  
        &              &                                        &$11x^2+9xy+11y^2$& $\langle 11,4+\omega \rangle$  &  $- 108844203402491055833088000000$                     \\ \hline
-427 & $-427$ & $\frac{1+\sqrt{-427}}{2}$&$x^2+xy+107y^2$    & $\langle 1, \omega \rangle$        & $x^2 + 15611455512523783919812608000x +$           \\  
        &              &                                        &$7x^2+7xy+17y^2$  &  $\langle 7,3+\omega \rangle$    & $+ 155041756222618916546936832000000$               \\ \hline
\end{tabular}
\end{center}
}
\caption{Imaginary quadratic fields $\mathbb K= \mathbb Q(\sqrt{D})$ of class number 2}

  \label{roma2}
\end{table}

\begin{table}
{\scriptsize
\begin{center}
\begin{tabular}{|c|c|c|c|c|}    \hline
$D$  &$\Delta$& $\omega$                       &$\pi_a$& Systems                      \\  \hline
-5     & $-20$   & $\sqrt{-5}$                      &2 & $2x^2+2xy-2y^2= u$            \\  
        &              &                                        & & $2xy+y^2=v$                           \\ \hline
-6     & $-24$   & $\sqrt{-6}$                      & 2&  $2x^2-3y^2= u$                                                 \\  
        &              &                                        & &  $2xy=v$                                                  \\ \hline
-10   & $-40$   & $\sqrt{-10}$                    &2 &  $2x^2-5y^2= u$                                             \\
        &              &                                        & &  $2xy=v$                                                \\ \hline             
-13   & $-52$   & $\sqrt{-13}$                    & 2 &$2x^2+2xy-6y^2= u$    \\
        &              &                                        &  &$2xy+y^2=v$                                            \\ \hline
-15   & $-15$   & $\frac{1+\sqrt{-15}}{2}$ & $\omega$  &$x^2+4xy= u$                                  \\ 
        &              &                                        &   &$-x^2+y^2=v$                                                        \\ \hline
-22   & $-88$   & $\sqrt{-22}$                    &  2 &$2x^2-11y^2= u$                                    \\ 
        &              &                                        &   &$2xy=v$                                 \\ \hline  
-35   & $-35$   & $\frac{1+\sqrt{-35}}{2}$ & $\omega$   &$x^2+6xy= u$                                          \\
        &              &                                        &  &$-x^2+y^2=v$                                                  \\ \hline
-37   & $-148$ & $\sqrt{-37}$                    & 2 &$2x^2+2xy-18y^2= u$                              \\ 
        &              &                                       &   &$2xy+y^2=v$                        \\ \hline  
-51   & $-51$   &$\frac{1+\sqrt{-51}}{2}$  & 3  &$3x^2+2xy-4y^2= u$                                          \\ 
        &              &                                        &  &$2xy+y^2=v$                                                 \\ \hline
-58   & $-232$ & $\sqrt{-58}$                    &  2 &$2x^2-29y^2= u$                     \\  
        &              &                                        &  &$2xy=v$          \\ \hline
-91   & $-91$   & $\frac{1+\sqrt{-91}}{2}$  & $1+\omega$ &$2x^2+10xy+y^2= u$                                   \\  
        &              &                                        &  &$-x^2+y^2=v$                     \\ \hline  
-115 & $-115$ & $\frac{1+\sqrt{-115}}{2}$& 5&$5x^2+4xy-5y^2= u$                                  \\  
        &              &                                        &  &$2xy+y^2=v$                                       \\ \hline
-123 & $-123$ & $\frac{1+\sqrt{-123}}{2}$& 3 &$3x^2+2xy-10y^2= u$                               \\  
        &              &                                        &  &$2xy+y^2=v$                                 \\ \hline
-187 & $-187$ & $\frac{1+\sqrt{-187}}{2}$& $1+\omega$  &$2x^2+14xy+y^2= u $                          \\  
        &              &                                        &  &$-x^2+y^2=v$       \\ \hline
-235 & $-235$ & $\frac{1+\sqrt{-235}}{2}$& 5  &$5x^2+4xy-11y^2= u$                     \\  
        &              &                                        &  &$2xy+y^2=v$                       \\ \hline
-267 & $-267$ & $\frac{1+\sqrt{-267}}{2}$&3 &$3x^2+2xy-22y^2= u$                    \\  
        &              &                                        & &$2xy+y^2=v$                  \\ \hline
-403 & $-403$ & $\frac{1+\sqrt{-403}}{2}$& $4+\omega$ &$5x^2+22xy+4y^2= u$\\  
        &              &                                        & &$-x^2+y^2=v$                   \\ \hline
-427 & $-427$ & $\frac{1+\sqrt{-427}}{2}$& 7 &$7x^2+6xy-14y^2= u$  \\  
        &              &                                        &  &$2xy+y^2=v$             \\ \hline
\end{tabular}
\end{center}
}
\caption{Imaginary quadratic fields $\mathbb K= \mathbb Q(\sqrt{D})$ of class number 2}

\label{tab3a}
\end{table}

\begin{table}
\begin{center}
\begin{tabular}{|c|c|c|c|c|c|}    \hline
$D$  &$\Delta$& $\omega$                       & $Q_i$                       & Ideals                                           & $h_{\mathbb{K}}(x)$                        \\  \hline
-23   & $-23$   & $\frac{1+\sqrt{-23}}{2}$ & $x^2+xy+6y^2$      & $\langle 1, \omega \rangle$    & \tiny{$x^3 + 3491750x^2 -$}                                                    \\  
         &               &                                           &$2x^2+xy+3y^2$    & $\langle 2,\omega \rangle$     & \tiny{$- 5151296875x + 12771880859375$}                                                       \\
         &               &                                           &                                   &                                                      &                                                                                                                   \\ \hline
-31     & $-31$   & $\frac{1+\sqrt{-31}}{2}$                      & $x^2+xy+8y^2$             & $\langle 1, \omega \rangle$        & \tiny{$x^3 + 39491307x^2 -$}                                                   \\  
        &              &                                        &$2x^2+xy+4y^2$            & $\langle 2,1+ \omega \rangle$        &  \tiny{$- 58682638134x + 1566028350940383$}                                                    \\ 
        &               &                                           &                                   &                                                      &                                                                                                                   \\ \hline
-59   & $-59$   & $\frac{1+\sqrt{-59}}{2}$                    & $x^2+xy+15y^2$            & $\langle 1, \omega \rangle$       &\tiny{$x^3 + 30197678080x^2 -$}                                              \\
        &              &                                        &$3x^2+xy+5y^2$             & $\langle 3,\omega \rangle$       & \tiny{$- 140811576541184x + 374643194001883136$}                                                  \\ 
        &               &                                           &                                   &                                                      &                                                                                                                   \\ \hline            
-83   & $-83$   & $\frac{1+\sqrt{-83}}{2}$                    & $x^2+xy+21y^2$            & $\langle 1, \omega \rangle$       & \tiny{$x^3 + 2691907584000x^2 -$}                                             \\
        &              &                                        &$3x^2+xy+7y^2$    &  $\langle 3,\omega \rangle$   & \tiny{$- 41490055168000000x +$}                                              \\ 
                 &               &                                           &                                   &                                                      &\tiny{$+ 549755813888000000000$}                                                       \\ \hline
-107   & $-107$   & $\frac{1+\sqrt{-107}}{2}$ & $x^2+xy+27y^2$        & $\langle 1, \omega \rangle$       & \tiny{$x^3 + 129783279616000x^2 -$}                                                    \\ 
        &              &                                        &$3x^2+xy+9y^2$      & $\langle 3,\omega \rangle$        & \tiny{$- 6764523159552000000x +$}                                                           \\ 
                 &               &                                           &                                   &                                                      & \tiny{$+ 337618789203968000000000$}                                         \\ \hline
-139   & $-139$   & $\frac{1+\sqrt{-139}}{2}$                    & $x^2+xy+35y^2$            & $\langle 1, \omega \rangle$       & \tiny{$x^3 + 12183160834031616x^2-$}                                        \\ 
        &              &                                        &$5x^2+xy+7y^2$          &  $\langle 5,\omega \rangle$        & \tiny{$- 53041786755137667072x +$}                                    \\ 
                 &               &                                           &                                   &                                                      & \tiny{$+    67408489017571610198016$}                                                 \\ \hline
-211   & $-211$   & $\frac{1+\sqrt{-211}}{2}$ & $x^2+xy+53y^2$        & $\langle 1, \omega \rangle$        &  \tiny{$x^3 + 65873587288630099968x^2 +$}                                             \\
        &              &                                        &$5x^2+3xy+11y^2$      & $\langle 5,1+\omega \rangle$        &   \tiny{$+ 277390576406111100862464x +$}                                                    \\ 
                 &               &                                           &                                   &                                                      &   \tiny{$+5310823021408898698117644288$}                                                \\ \hline
-283   & $-283$ & $\frac{1+\sqrt{-283}}{2}$                    & $x^2+xy+71y^2 $           & $\langle 1, \omega \rangle$       &  \tiny{$x^3 + 89611323386832801792000x^2 +$}                                \\ 
        &              &                                       &$7x^2+5xy+11y^2$   & $\langle 7,2+\omega \rangle$    & \tiny{$+ 90839236535446929408000000x +$}                          \\ 
                 &               &                                           &                                   &                                                      &  \tiny{$+ 201371843156955365376000000000$}                                                     \\ \hline
\end{tabular}
\end{center}
\caption{Imaginary quadratic fields $\mathbb K= \mathbb Q(\sqrt{D})$ of class number 3}

 \label{tab4a}
\end{table}

%

\begin{table}
\begin{center}
\begin{tabular}{|c|c|c|c|c|c|}    \hline 
$D$  &$\Delta$& $\omega$                       & $Q_i$                       & Ideals                                           & $h_{\mathbb{K}}(x)$                        \\  \hline                  
-307   & $-307$   &$\frac{1+\sqrt{-307}}{2}$  & $x^2+xy+77y^2$      & $\langle 1, \omega \rangle$        & \tiny{$x^3 + 805016812009981390848000x^2 -$}                                           \\ 
        &              &                                        &$7x^2+xy+11y^2$    & $\langle 7,\omega \rangle$    & \tiny{$- 5083646425734146162688000000x +$}                                                   \\
         &               &                                           &                                   &                                                      &  \tiny{$+ 8987619631060626702336000000000$}                                       \\ \hline        
-331   & $-331$ & $\frac{1+\sqrt{-331}}{2}$                    & $x^2+xy+83y^2$            & $\langle 1, \omega \rangle$       &  \tiny{$x^3 + 6647404730173793386463232x^2 +$}                        \\  
        &              &                                        &$5x^2+3xy+17y^2$          & $\langle 5,1+\omega \rangle$         &  \tiny{$+ 368729929041040103875232661504x +$}             \\
         &               &                                           &                                   &                                                      &             \tiny{$+ 56176242840389398230218488594563072$}           \\ \hline        
-379   & $-379$   & $\frac{1+\sqrt{-379}}{2}$  & $x^2+xy+95y^2$     & $\langle 1, \omega \rangle$        &  \tiny{$x^3 + 364395404104624239018246144x^2 -$}                                    \\  
        &              &                                        &$5x^2+xy+19y^2$    & $\langle 5,\omega \rangle$     &  \tiny{$- 121567791009880876719538528321536x +$}                                              \\
                 &               &                                           &                                   &                                                      &  \tiny{$+15443600047689011948024601807415148544$}             \\ \hline  
-499 & $-499$ & $\frac{1+\sqrt{-499}}{2}$& $x^2+xy+125y^2$     & $\langle 1, \omega \rangle$        &  \tiny{$x^3 + 3005101108071026200706725969920x^2 -$}                                  \\  
        &              &                                        &$5x^2+xy+25y^2$    & $\langle 5,\omega \rangle$     &  \tiny{$- 6063717825494266394722392560011051008x +$}                                        \\ 
                 &               &                                           &                                   &                                                      & \tiny{$+4671133182399954782798673154437441310949376$}             \\ \hline
-547 & $-547$ & $\frac{1+\sqrt{-547}}{2}$&$x^2+xy+137y^2$      & $\langle 1, \omega \rangle$       &   \tiny{$x^3 + 81297395539631654721637478400000x^2 -$}                                \\  
        &              &                                        &$11x^2+5xy+13y^2$  & $\langle 11,2+\omega \rangle$    &   \tiny{$ 139712328431787827943469744128000000x +$}                                  \\
                 &               &                                           &                                   &                                                      &  \tiny{$ 83303937570678403968635240448000000000$}          \\ \hline
-643 & $-643$ & $\frac{1+\sqrt{-643}}{2}$&$x^2+xy+161y^2$      & $\langle 1, \omega \rangle$       &    \tiny{$x^3 + 39545575162726134099492467011584000x^2 - $}                            \\  
        &              &                                        &$7x^2+xy+23y^2$    & $\langle 7,\omega \rangle$    &    \tiny{$6300378505047247876499651797450752000000x +$}                                   \\ 
                 &               &                                           &                                   &                                                      &   \tiny{$+ 308052554652302847380880841299197952000000000$}               \\ \hline
-883 & $-883$ & $\frac{1+\sqrt{-883}}{2}$&$x^2+xy+221y^2$      & $\langle 1, \omega \rangle$       &    \tiny{$x^3 + 34903934341011819039224295011933392896000x^2 -$}                      \\  
        &              &                                        &$13x^2+xy+17y^2$  & $\langle 13,\omega \rangle$    &  \tiny{$151960111125245282033875619529124478976000000x +$}                           \\ 
                 &               &                                           &                                   &                                                      &  \tiny{$+167990285381627318187575520800123387904000000000$}                   \\ \hline
-907 & $-907$ & $\frac{1+\sqrt{-907}}{2}$&$x^2+xy+227y^2$      & $\langle 1, \omega \rangle$       &  \tiny{$x^3 + 123072080721198402394477590506838687744000x^2 +$}                    \\  
         &               &                                             &$13x^2+9xy+19y^2$  & $\langle 13,4+\omega \rangle$    &  \tiny{$+ 39181594208014819617565811575376314368000000x+$} \\
         &               &                                             &                                      &                                                      &\tiny{$+ 149161274746524841328545894969274007552000000000$}     \\ \hline

\end{tabular}
\end{center}
\caption{Imaginary quadratic fields $\mathbb K= \mathbb Q(\sqrt{D})$ of class number 3}
  \label{tab5}
\end{table}


\begin{table}
{\scriptsize
\begin{center}
\begin{tabular}{|c|c|c|c|c|}    \hline
$D$  &$\Delta$ & $\omega$                            &$\pi_a$                 & Systems                      \\  \hline
$-23$     & $-23$   & $\frac{1+\sqrt{-23}}{2}$     &  $2-\omega$        & $x^3-9x^2y-9xy^2+3y^3=u$                                                    \\  
           &               &                                         &                             & $x^3+3x^2y-3xy^2-2y^3=v$                                                       \\ \hline
$-31$     & $-31$   & $\frac{1+\sqrt{-31}}{2}$     & $1-\omega$         & $-12x^2y-18xy^2+y^3=u$                                         \\  
        &              &                                             &                             &  $x^3+3x^2y-3xy^2-4y^3=v$                  \\ \hline
$-59$   & $-59$   & $\frac{1+\sqrt{-59}}{2}$       & $-3-\omega$       &   $-4x^3-15x^2y+15xy^2+10y^3=u$                           \\
        &              &                                             &                             &  $x^3-3x^2y-6xy^2+y^3=v$            \\ \hline             
$-83$   & $-83$   & $\frac{1+\sqrt{-83}}{2}$       & $3-\omega$         &$2x^3-21x^2y-21xy^2+14y^3=u$                         \\
        &              &                                             &                             & $x^3+3x^2y-6xy^2-3y^3=v$     \\ \hline
$-107$   & $-107$   & $\frac{1+\sqrt{-107}}{2}$ &  $-\omega$          & $-x^3-27x^2y+27y^3=u$               \\ 
        &              &                                             &                             &        $x^3-9xy^2-y^3=v$       \\ \hline
$-139$   & $-139$   & $\frac{1+\sqrt{-139}}{2}$ & $10-\omega$       &  $9x^3-21x^2y-42xy^2+7y^3=u$              \\ 
        &              &                                             &                             &      $x^3+6x^2y-3xy^2-3y^3=v$              \\ \hline  
$-211$   & $-211$   & $\frac{1+\sqrt{-211}}{2}$ & $9-\omega$         &$8x^3-27x^2y-69xy^2+6y^3=u$                               \\
        &              &                                             &                             &  $x^3+6x^2y-3xy^2-5y^3=v$                              \\ \hline
$-283$   & $-283$ & $\frac{1+\sqrt{-283}}{2}$   & $-16-\omega$     & $-17x^3-45x^2y+48xy^2+35y^3=u$                              \\ 
        &              &                                             &                             &  $x^3-6x^2y-9xy^2+y^3=v$   \\ \hline  
$-307$   & $-307$   &$\frac{1+\sqrt{-307}}{2}$  & $7-2\omega$       & $5x^3-66x^2y-33xy^2+33y^3=u$                       \\ 
        &              &                                             &                             &  $2x^3+3x^2y-9xy^2-2y^3=v$                       \\ \hline
$-331$   & $-331$ & $\frac{1+\sqrt{-331}}{2}$   & $-6-\omega$       & $-7x^3-54x^2y+39xy^2+69y^3=u$                 \\  
        &              &                                             &                             & $x^3-3x^2y-12xy^2+y^3=v$\\ \hline
$-379$   & $-379$   & $\frac{1+\sqrt{-379}}{2}$ & $-5-\omega$       &  $-6x^3-57x^2y+57xy^2+76y^3=u$                           \\  
        &              &                                             &                            &  $x^3-3x^2y-12xy^2+3y^3=v$                           \\ \hline  
$-499$ & $-499$ & $\frac{1+\sqrt{-499}}{2}$     & $-\omega$          & $-x^3-75x^2y+125y^3=u$               \\  
        &              &                                             &                            & $x^3-15xy^2-y^3=v$                   \\ \hline
$-547$ & $-547$ & $\frac{1+\sqrt{-547}}{2}$     & $29-2\omega$    & $-27x^3-60x^2y-123xy^2+5y^3=u$         \\  
        &              &                                             &                            &  $2x^3+9x^2y-3xy^2-4y^3=v$          \\ \hline
$-643$ & $-643$ & $\frac{1+\sqrt{-643}}{2}$     & $14-\omega$      & $13x^3-69x^2y-138xy^2+69y^3=u$     \\  
        &              &                                             &                            &  $x^3+6x^2y-9xy^2-7y^3=v$         \\ \hline
$-883$ & $-883$ & $\frac{1+\sqrt{-883}}{2}$     & $13+3\omega$   & $16x^3+153x^2y-51xy^2-68y^3=u$       \\  
        &              &                                             &                            &  $-3x^3+3x^2y+12xy^2-y^3=v$         \\ \hline
$-907$ & $-907$ & $\frac{1+\sqrt{-907}}{2}$     & $-14+3\omega$  & $-11x^3+147x^2y+150xy^2-37y^3=u$      \\  
        &              &                                             &                            &  $-3x^3-6x^2y+9xy^2+5y^3=v$  \\ \hline
\end{tabular}
\end{center}
}
\caption{Imaginary quadratic fields $\mathbb K= \mathbb Q(\sqrt{D})$ of class number 3}
\label{tab6}
\end{table}

\end{document}